\mathchardef\mhyp="2D 
\newcommand{\Clopen}{\mathsf{Clopen}}
\newcommand{\Open}{\mathsf{Open}}
\newcommand{\Closed}{\mathsf{Closed}}
\newcommand{\Constructible}{\mathsf{Constructible}}
\newcommand{\Borel}{\mathsf{Borel}}
\newcommand{\BP}{\mathsf{BP}}
\newcommand{\Nwd}{\mathsf{Nwd}}
\newcommand{\G}[1]{\mathsf{G_{#1}}}
\newcommand{\RO}{\mathsf{RO}}
\newcommand{\Or}{\mathsf{\mhyp or\mhyp}}
\newcommand{\Meets}{\mathsf{\mhyp meets\mhyp}}
\newcommand{\ClosedOrOpen}{{\Closed\Or\Open}}
\newcommand{\ClosedOrG}[1]{{\Closed\Or\G{#1}}}
\newcommand{\ClosedOrNwd}{{\Nwd\Or\Closed}}
\newcommand{\OpenOrNwd}{{\Nwd\Or\Open}}
\newcommand{\ClosedOrRO}{{\Closed\Or\RO}}
\newcommand{\ClosedMeetsRO}{{\Closed\Meets\RO}}
\newcommand{\NwdOrRO}{{\Nwd\Or\RO}}
\newcommand{\A}{\mathcal A}
\newcommand{\B}{\mathcal B}
\newcommand{\F}{\mathcal F}
\newcommand{\U}{\mathcal U}
\newcommand{\E}{\mathcal E}
\newcommand{\I}{\mathcal I}
\newcommand{\w}{\omega}
\newcommand{\clo}[1]{\overline{#1}}
\newcommand{\symdiff}{\mathbin{\Delta}}
\newcommand{\restr}[1]{\mathbin{\restriction}#1}
\def \int {\operatorname{int}} 
\def \ro {\operatorname{ro}} 
\newtheorem{theorem}{Theorem}[section]
\newtheorem{corollary}[theorem]{Corollary}
\newtheorem{proposition}[theorem]{Proposition}
\theoremstyle{definition}
\newtheorem{definition}[theorem]{Definition}
\newtheorem{remark}[theorem]{Remark}
\newtheorem{example}[theorem]{Example}
\newif \iftransposed
\newcommand{\tr}[2]{\iftransposed#2\else#1\fi}
\title{Lower separation axioms via Borel and Baire algebras}
\author{Taras Banakh, Adam Barto\v s}
\address{T.~Banakh: Ivan Franko National University of Lviv (Ukraine), and Jan Kochanowski University in Kielce (Poland)}
\email{t.o.banakh@gmail.com}
\address{A.~Barto\v s: Department of Mathematical Analysis, Faculty of Mathematics and Physics,
Charles University, Prague (Czech Republic)}
\email{drekin@gmail.com}
\keywords{Separation axiom, Borel set, Baire property, nowhere dense set}
\subjclass[2010]{%
	Primary 54D10; 
	Secondary %
	54H05, 
	54B05, 
	54B10 
}
\thanks{The second author was supported by the grants GAUK~970217 and SVV-2017-260456 of Charles University.}
\dedicatory{To the memory of Stoyan Nedev}
\begin{document}
\begin{abstract} Let $\kappa$ be an infinite regular cardinal. We define a topological space $X$ to be
{\em $T_{\kappa\mhyp\Borel}$-space} (resp. a {\em $T_{\kappa\mhyp\BP}$-space}) if for every $x\in X$ the singleton $\{x\}$ belongs to the smallest $\kappa$-additive algebra of subsets of $X$ that contains all open sets (and all nowhere dense sets) in $X$. Each $T_1$-space is a $T_{\kappa\mhyp\Borel}$-space and each $T_{\kappa\mhyp\Borel}$-space is a $T_0$-space. On the other hand, $T_{\kappa\mhyp\BP}$-spaces need not be $T_0$-spaces.

We prove that a topological space $X$ is a $T_{\kappa\mhyp\Borel}$-space (resp. a $T_{\kappa\mhyp\BP}$-space) if and only if for each point $x\in X$ the singleton $\{x\}$ is the intersection of a closed set and a $G_{<\kappa}$-set in $X$ (resp. $\{x\}$ is either nowhere dense or a $G_{<\kappa}$-set in $X$). Also we present simple examples distinguishing the separation axioms $T_{\kappa\mhyp\Borel}$ and $T_{\kappa\mhyp\BP}$ for various infinite cardinals $\kappa$, and we relate the axioms to several known notions, which results in a quite regular two-dimensional diagram of lower separation axioms.
\end{abstract}
\maketitle

\section{Introduction}

In this paper we define and study some separation axioms, weaker than the classical separation axiom $T_1$. First we recall three known definitions.

A topological space $X$ is called
\begin{itemize}
\item a {\em $T_1$-space} if for each $x\in X$ the singleton $\{x\}$ is a closed subset of $X$;
\item a {\em $T_{\frac12}$-space} if for each $x\in X$ the singleton $\{x\}$ is closed or open in $X$;
\item a {\em $T_0$-space} if for each $x\in X$ the singleton $\{x\}$ coincides with the intersection of all open or closed sets containing $x$.
\end{itemize}

$T_{\frac12}$-spaces were introduced by McSherry in \cite{McSherry_74} under name $T_{ES}$. The name $T_{\frac12}$ comes from Levine, who earlier introduced a different but equivalent condition in \cite{Levine_70}.

These known notions suggest the following general definition.

\begin{definition} Let $\A(X)$ be a family of sets of a topological space $X$. We shall say that $X$ is a {\em $T_\A$-space} if for each $x\in X$ the singleton $\{x\}$ belongs to the family $\A(X)$.
\end{definition}

In the role of the family $\A(X)$ we shall consider the following families:
\begin{itemize}
\item the family $\Open(X)$ of all open subsets of $X$, i.e., the topology of $X$;
\item the family $\Closed(X)$ of all closed subsets of $X$;
\item the algebra $\Constructible(X)$ of all constructible subsets of $X$, i.e., the smallest algebra of sets containing the topology of $X$;
\item the $\sigma$-algebra $\Borel(X)$ of all Borel subsets of $X$, i.e., the smallest $\sigma$-algebra of sets containing the topology of $X$;
\item the ideal $\Nwd(X)$ of all nowhere dense subsets of $X$;
\item the $\sigma$-algebra $\BP(X)$ of all sets with the Baire property in $X$, i.e., the smallest $\sigma$-algebra of sets, containing all open and all nowhere dense sets.
\end{itemize}

We recall that a family $\A$ of subsets of a set $X$ is called
\begin{itemize}
\item an {\em algebra} if for any $A,B\in\A$ the sets $A\cap B$, $A\cup B$, $X\setminus A$ belong to $\A$;
\item {\em $\kappa$-additive} for a cardinal $\kappa$ if for any subfamily $\F\subset\A$ of cardinality $|\F|<\kappa$ the union $\bigcup\F$ belongs to $\A$;
\item a {\em $\sigma$-algebra} if $\A$ is an $\w_1$-additive algebra of sets.
\end{itemize}

For a topological space $X$ and an infinite cardinal $\kappa$ let
\begin{itemize}
\item $\kappa$-$\Borel(X)$ be the smallest $\kappa$-additive algebra of subsets of $X$, containing all open sets in $X$;
\item $\kappa$-$\BP(X)$ be the smallest $\kappa$-additive algebra of subsets of $X$, containing all open sets and all nowhere dense sets in $X$;
\item $\infty\mhyp\Borel(X)=\bigcup\limits_{\kappa}\kappa\mhyp\Borel(X)=|X|^+\mhyp\Borel(X)$;
\item $\infty\mhyp\BP(X)=\bigcup\limits_{\kappa}\kappa\mhyp\BP(X)=|X|^+\mhyp\BP(X)$.
\end{itemize}
Here by $|X|$ we denote the cardinality of the space $X$ and by $|X|^+$ the successor of the cardinal $|X|$.

The property of being a $T_\Borel$-space, i.e. being a $T_{\kappa\mhyp\Borel}$-space for $\kappa = \w_1$, was considered by Harley and NcNulty in \cite{Harley_McNulty_79}. They gave several characterizations, examined preservation under subspaces and products, and showed that the property lies strictly between $T_1$ and $T_0$.
In \cite{Lo_01} Lo uses the name $GT_D$ instead of $T_\Borel$ and considers also spaces where every singleton is either closed or $G_\delta$. These spaces are called $GT_{\frac12}$ by Lo. Among other results regarding interactions between these properties and the lattice of all topologies on a particular set, minimal $GT_D$- and minimal $GT_{\frac12}$-spaces are characterized.

\begin{remark} \label{t:regular}
	For an infinite singular cardinal $\kappa$ and a topological space $X$ we
have $\kappa\mhyp\Borel(X) = \kappa^+\mhyp\Borel(X)$ and
$\kappa\mhyp\BP(X) = \kappa^+\mhyp\BP(X)$. In fact, any $\kappa$-additive
algebra is actually $\kappa^+$-additive. Since there is a cofinal set
$\{\alpha_\beta: \beta < \lambda\} \subset \kappa$ for some $\lambda < \kappa$,
we have $\bigcup_{\alpha < \kappa} A_\alpha = \bigcup_{\beta < \lambda}
(\bigcup_{\alpha < \alpha_\beta} A_\alpha)$ for any family $\{A_\alpha: \alpha
< \kappa\} \subset \A$. Therefore, we may often restrict ourselves to regular
$\kappa$.
\end{remark}


Given two families $\A(X)$ and $\B(X)$ we also use the following notation.
\begin{itemize}
	\item $\A\Or\B(X)$ denotes the family $\A(X) \cup \B(X)$;
	\item $\A\Meets\B(X)$ denotes the family $\{A \cap B: A \in \A(X), B \in \B(X)\}$.
\end{itemize}

Using the introduced families and notation, $T_\Open$ means discreteness, $T_\Closed$ is $T_1$, and $T_\ClosedOrOpen$ is $T_\frac{1}{2}$. Also, $T_{\infty\mhyp\Borel}$ is equivalent to $T_0$, which shall be proved in Corollary~\ref{c:T0=TB}.
The inclusion relations between the families are described in the following diagram (in which $\kappa$ is any regular uncountable cardinal and an arrow $\A\to\B$ means that $\A\subset\B$).
$$
\xymatrix{
\ClosedOrOpen(X)\ar[r]&\Constructible(X)\ar[r]\ar@{=}[d]&\Borel(X)\ar@{=}[d]\\
\Closed(X)\ar[u]&\w\mhyp\Borel(X)\ar[r]\ar[d]&\w_1\mhyp\Borel(X)\ar[r]\ar[d]&\kappa\mhyp\Borel(X)\ar[r]\ar[d]&\infty\mhyp\Borel(X)\ar[d]\\
\Nwd(X)\ar[r]&\w\mhyp\BP(X)\ar[r]&\w_1\mhyp\BP(X)\ar[r]&\kappa\mhyp\BP(X)\ar[r]&\infty\mhyp\BP(X)\\
&&\BP(X)\ar@{=}[u]
}
$$

These inclusion relations imply the following implications between the corresponding separation axioms (holding for any topological space):
\begin{equation}\label{diag_1}
\xymatrix{
T_1\ar@{=>}[r]&T_{\frac12}\ar@{=>}[r]&T_{\Constructible}\ar@{=>}[r]&T_\Borel\ar@{=>}[rr]&&T_0\\
T_{\Closed}\ar@{<=>}[u]\ar@{=>}[r]&T_{\ClosedOrOpen}\ar@{<=>}[u]\ar@{=>}[r]&T_{\w\mhyp\Borel}\ar@{=>}[r]\ar@{=>}[d]\ar@{<=>}[u]&T_{\w_1\mhyp\Borel}\ar@{=>}[r]\ar@{=>}[d]\ar@{<=>}[u]&T_{\kappa\mhyp\Borel}\ar@{=>}[r]\ar@{=>}[d]&T_{\infty\mhyp\Borel}\ar@{=>}[d]\ar@{<=>}[u]\\
&T_\Nwd\ar@{=>}[r]&T_{\w\mhyp\BP}\ar@{=>}[r]&T_{\w_1\mhyp\BP}\ar@{=>}[r]&T_{\kappa\mhyp\BP}\ar@{=>}[r]&T_{\infty\mhyp\BP}
}
\end{equation}


\section{Characterizations}

Now we give characterizations of separation axioms $T_{\kappa\mhyp\Borel}$ and $T_{\kappa\mhyp\BP}$ for various cardinals $\kappa$. A subset $A$ of a topological space $X$ is defined to be a {\em $G_{<\kappa}$-set} in $X$ if $A=\bigcap\U$ for some family $\U\subset\Open(X)$ of cardinality $|\U|<\kappa$.
We also denote the family of all $G_{<\kappa}$-sets in $X$ by $\G{<\kappa}(X)$ and we denote the union $\bigcup_\kappa \G{<\kappa}(X)$ by $\G{\infty}(X)$. Since a $G_{<\w_1}$-set is also called a $G_\delta$-set, $\G{\delta}(X)$ will be the alternative name for $\G{<\w_1}(X)$.

The following theorem was proved for $\kappa = \w_1$ by Harley and McNulty in \cite{Harley_McNulty_79}. We include the generalized proof for completeness.

\begin{theorem}\label{t:Borel} Let $\kappa$ be an infinite regular cardinal. A topological space $(X,\tau)$ is a $T_{\kappa\mhyp\Borel}$-space if and only if for each $x\in X$ the singleton $\{x\}$ can be written as $\{x\}=F\cap G$ for some closed set $F\subset X$ and some $G_{<\kappa}$-set $G\subset X$.
That is, $T_{\kappa\mhyp\Borel}$ is equivalent to $T_{\Closed\Meets\G{<\kappa}}$.
\end{theorem}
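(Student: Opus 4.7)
The backward direction is immediate from the definitions: if $\{x\} = F \cap G$ with $F$ closed and $G$ a $G_{<\kappa}$-set, then $F \in \Closed(X) \subseteq \kappa\mhyp\Borel(X)$, while $G$ is an intersection of fewer than $\kappa$ open sets, hence (since a $\kappa$-additive algebra is also closed under $<\kappa$-intersections by De~Morgan) belongs to $\kappa\mhyp\Borel(X)$. Thus $\{x\} = F \cap G \in \kappa\mhyp\Borel(X)$.

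For the forward direction, my plan is to produce a $\kappa$-additive algebra $\B \supseteq \Open(X)$ whose members admit, at every interior point, a local representation of the desired form. Concretely, declare a subset $S \subseteq X$ to be \emph{locally $\ClosedMeetsRO$-like at every point} (writing $S \in \mathcal{L}$) if for every $y \in S$ there exist a closed set $F$ and a $G_{<\kappa}$-set $G$ with $y \in F \cap G \subseteq S$. Then set
\[
\B = \{A \subseteq X : A \in \mathcal{L} \text{ and } X \setminus A \in \mathcal{L}\}.
\]
Once we show $\kappa\mhyp\Borel(X) \subseteq \B$, the theorem follows: applying the defining condition for $A = \{x\} \in \B$ at the point $y = x$ forces the witnessing $F, G$ to satisfy $F \cap G = \{x\}$.

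To verify that $\B$ is a $\kappa$-additive algebra containing $\Open(X)$, three things must be checked. First, every open $U$ lies in $\B$: the pair $(F, G) = (X, U)$ witnesses $U \in \mathcal{L}$ at each of its points, and $(F, G) = (X \setminus U, X)$ witnesses $X \setminus U \in \mathcal{L}$. Second, $\B$ is closed under complement, which is built into the symmetric definition. Third, and this is the main step, $\B$ is closed under unions of size $\lambda < \kappa$: given $\{A_\alpha : \alpha < \lambda\} \subseteq \B$ and $A = \bigcup_\alpha A_\alpha$, a point $y \in A$ belongs to some $A_\alpha$, so the witnessing $F, G$ for $A_\alpha$ also witness $A$; for a point $y \notin A$, one intersects, over $\alpha < \lambda$, the witnesses $F_\alpha, G_\alpha$ for $y \in X \setminus A_\alpha$, obtaining $y \in \bigcap_\alpha F_\alpha \cap \bigcap_\alpha G_\alpha \subseteq X \setminus A$.

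The one delicate point, and the main obstacle, is that $\bigcap_{\alpha < \lambda} G_\alpha$ must still be a $G_{<\kappa}$-set. Writing each $G_\alpha$ as an intersection of $\mu_\alpha < \kappa$ open sets, the union of the indexing sets has cardinality $\sum_{\alpha < \lambda} \mu_\alpha$, and it is exactly the \emph{regularity} of $\kappa$ (together with $\lambda < \kappa$ and $\mu_\alpha < \kappa$) that guarantees this sum is still $< \kappa$; the intersection of $\lambda$ closed sets being closed is automatic. This is the only place where the hypothesis on $\kappa$ is used, and it also makes transparent why passing from singular to regular $\kappa$ (via Remark~\ref{t:regular}) costs nothing.
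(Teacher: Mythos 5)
Your proof is correct, but it is organized differently from the paper's. The paper proves the forward direction by writing $\kappa\mhyp\Borel(X)$ as an explicit transfinite hierarchy $\bigcup_\alpha\kappa\mhyp\Borel_\alpha(X)$ starting from $\ClosedOrOpen(X)$ and closing under $<\kappa$-unions and $<\kappa$-intersections, and then proves by induction on $\alpha$ the statement ``every $E\in\kappa\mhyp\Borel_\alpha(X)$ containing $x$ contains a set $F\cap G$ around $x$.'' You instead avoid the hierarchy entirely: you define the family $\B$ of sets that, together with their complements, admit a local $\Closed\Meets\G{<\kappa}$ witness at each of their points, verify that $\B$ is a $\kappa$-additive algebra containing the open sets, and invoke minimality of $\kappa\mhyp\Borel(X)$. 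This is the standard ``good sets'' argument, and it is in fact exactly the technique the paper uses for the companion result on $\kappa\mhyp\BP(X)$ (Theorem~\ref{t:BP}), so your route has the virtue of treating both theorems uniformly and of dispensing with the rank stratification and the induction bookkeeping; the paper's route makes the Borel rank of the witness explicit, which is occasionally useful but not needed here. The mathematical core is identical in both: unions are handled by selecting the one summand containing the point, and complements/intersections are handled by intersecting $<\kappa$ many witnesses, with the regularity of $\kappa$ guaranteeing that $\sum_{\alpha<\lambda}\mu_\alpha<\kappa$ — you correctly isolate this as the only use of regularity. Two cosmetic remarks: your label ``locally $\ClosedMeetsRO$-like'' misnames the relevant family (it should refer to $\Closed\Meets\G{<\kappa}$, not to regular open sets), though your actual definition is the right one; and when checking that $\B$ is an algebra you should note explicitly that binary intersections follow from complements and binary unions via De~Morgan, which you use but do not state at that point.
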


\begin{proof} The ``if'' part is trivial. To prove the ``only if'' part, assume that for some $x\in X$ the singleton $\{x\}$ belongs to the algebra $\kappa$-$\Borel(X)\subset\mathcal P(X)$. Here $\mathcal P(X)$ stands for the power-set of $X$.
We need to prove that the singleton $\{x\}$ belongs to the family of sets $$\mathcal G=\{F\cap G:\mbox{$F$ is a closed set in $X$ and $G$ is a $G_{<\kappa}$-set in $X$}\}.$$

The algebra $\kappa\mhyp\Borel(X)$ can be written as the union $\kappa\mhyp\Borel(X)=\bigcup_{\alpha}\kappa\mhyp\Borel_\alpha(X)$ of an increasing transfinite sequence $\big(\kappa\mhyp\Borel_\alpha(X)\big)_{\alpha}$ of families $\kappa\mhyp\Borel_\alpha(X)\subset\mathcal P(X)$, defined for any ordinal $\alpha$ by the recursive formula:
$$
\kappa\mhyp\Borel_\alpha(X):=\ClosedOrOpen(X)\cup
\big\{\textstyle{\bigcap\F,\bigcup\F}:\F\subset\bigcup_{\beta<\alpha}\kappa\mhyp\Borel_\beta(X),\;|\F|<\kappa\big\}.$$

By transfinite induction, for every ordinal $\alpha$ we shall prove the following statement:
\begin{itemize}
\item[$(*_\alpha)$] for every set $E\in\kappa\mhyp\Borel_\alpha(X)$ containing $x$, there exists a set $G\in \mathcal G$ such that $x\in G\subset E$.
\end{itemize}
This statement is trivial for $\alpha=0$ (as $\kappa\mhyp\Borel_0(X)=\ClosedOrOpen(X)\subset\mathcal G$). Assume that for some ordinal $\alpha$ and all ordinals $\beta<\alpha$ the statements $(*_\beta)$ are proved. Take any set $B\in\kappa\mhyp\Borel_\alpha(X)$, containing $x$. The definition of the family $\kappa\mhyp\Borel_\alpha(X)$ implies that $B$ is equal to $\bigcap\E$ or $\bigcup\E$ for some non-empty family $\E\subset\bigcup_{\beta<\alpha}\kappa\mhyp\Borel_\beta(X)$ of cardinality $|\E|<\kappa$. If $B=\bigcup\E$, then $x\in E\subset B$ for some $E\in\E$. Choose $\beta<\alpha$ such that the family $\kappa\mhyp\Borel_\beta(X)$ contains the set $E$ and using the inductive hypothesis $(*_\beta)$, find a set $G\in\mathcal G$ such that $x\in G\subset E\subset B$.

Next, assume that $B=\bigcap\E$. By the inductive hypothesis, for every $E\in\mathcal E\subset\bigcup_{\beta<\alpha}\kappa\mhyp\Borel_\beta(X)$, there exists a set $G_E\in\mathcal G$ such that $x\in G_E\subset E$. For every $E\in\E$ the set $G_E$ can be written as $G_E=F_E\cap\bigcap\U_E$ for some closed set $F_E\subset X$ and some family $\U_E\subset\tau$ of cardinality $|\U_E|<\kappa$. The regularity of the cardinal $\kappa$ ensures that the family $\U=\bigcup_{E\in\E}\U_E$ has cardinality $|\U|<\kappa$ and hence the set $G:=\bigcap_{E\in\E}F_E\cap \bigcap\U$ belongs to the family $\mathcal G$. It is clear that $x\in G=\bigcap_{E\in\E}G_E\subset\bigcap\E=B$, which completes the proof of the statement $(*_\alpha)$.
\smallskip

If $\{x\}\in\kappa\mhyp\Borel(X)$, then $\{x\}\in \kappa\mhyp\Borel_\alpha(X)$ for some ordinal $\alpha$ and by the statement $(*_\alpha)$, there exists a set $G\in\mathcal G$ such that $x\in G\subset \{x\}$ and hence $\{x\}=G\in\mathcal G$.
\end{proof}

\begin{corollary}\label{c:T0=TB} A topological space $X$ is a $T_0$-space if
and only if $X$ is a $T_{\infty\mhyp\Borel}$-space.
\end{corollary}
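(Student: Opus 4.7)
The plan is to reduce both implications to Theorem~\ref{t:Borel} applied to the regular cardinal $\kappa = |X|^+$, since by definition $\infty\mhyp\Borel(X) = |X|^+\mhyp\Borel(X)$. Thus $T_{\infty\mhyp\Borel}$ is equivalent to the condition that every singleton $\{x\}$ has the form $F \cap G$ where $F$ is closed and $G$ is an intersection of at most $|X|$ open sets.

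For the implication $T_{\infty\mhyp\Borel} \Rightarrow T_0$, I would simply observe that if $\{x\} = F \cap \bigcap \U$ for a closed set $F$ and a family $\U$ of open sets (all containing $x$), then $\{x\}$ is exhibited as the intersection of a family of sets, each of which is either open or closed and contains $x$. This is exactly the $T_0$ condition.

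For the converse, assume $X$ is $T_0$ and fix $x \in X$. The plan is to witness the $T_0$ condition by one specific family: for each $y \in X \setminus \{x\}$ choose a set $U_y$ that is open or closed, contains $x$, and does not contain $y$. Partition the index set into $Y_{\mathrm{c}} = \{y : U_y \text{ closed}\}$ and $Y_{\mathrm{o}} = \{y : U_y \text{ open}\}$, and set $F = X \cap \bigcap_{y \in Y_{\mathrm{c}}} U_y$ and $G = X \cap \bigcap_{y \in Y_{\mathrm{o}}} U_y$. Then $F$ is closed, $G$ is a $G_{<|X|^+}$-set, and $\{x\} = F \cap G$. By Theorem~\ref{t:Borel} applied to $\kappa = |X|^+$, this gives $\{x\} \in |X|^+\mhyp\Borel(X) = \infty\mhyp\Borel(X)$.

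There is no real obstacle here beyond keeping the quantifiers straight; the corollary is essentially a repackaging of the characterization in Theorem~\ref{t:Borel} together with the observation that, in the $T_0$ definition, one may trivially restrict to the ``universal'' family of separating open-or-closed sets indexed by $X \setminus \{x\}$, whose cardinality is bounded by $|X| < |X|^+$.
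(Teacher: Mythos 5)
Your proposal is correct and matches the paper's intent: the corollary is stated without proof precisely because it is the specialization of Theorem~\ref{t:Borel} to the regular cardinal $\kappa=|X|^+$, combined with the observation that the $T_0$ condition is witnessed by the family of separating open-or-closed sets indexed by $X\setminus\{x\}$, which has cardinality at most $|X|<|X|^+$. Your splitting of that family into its closed part $F$ and open part $G$ (with the empty-intersection convention handled by intersecting with $X$) is exactly the intended argument.
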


\begin{remark}
	Theorem~\ref{t:Borel} implies that a topological space $X$ is a $T_\Constructible$-space if and only if $X$ is a $T_{\Closed\Meets\Open}$-space. So, $X$ is a $T_D$-space in the sense of \cite[Definition~3.1]{Aull_Thron_62}.
	The axiom $T_D$ is also used in point-free topology -- in the class of $T_D$-spaces the spaces can be reconstructed from their lattices of open sets. More precisely, every isomorphism of lattices of open sets of $T_D$-spaces is induced by exactly one homeomorphism between the spaces \cite[Proposition I.2.4]{Picado_Pultr_12}.
\end{remark}

Next, we characterize $T_{\kappa\mhyp\BP}$-spaces.

\begin{theorem}\label{t:BP} Let $\kappa$ be an infinite regular cardinal. A topological space $(X,\tau)$ is a $T_{\kappa\mhyp\BP}$-space if and only if for each $x\in X$ the singleton $\{x\}$ is either nowhere dense or a $G_{<\kappa}$-set in $X$.
That is, $T_{\kappa\mhyp\BP}$ is equivalent to $T_{\Nwd\Or\G{<\kappa}}$.
\end{theorem}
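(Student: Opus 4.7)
The ``if'' part is routine: $\Nwd(X) \subset \kappa\mhyp\BP(X)$, and every $G_{<\kappa}$-set is an intersection of fewer than $\kappa$ open sets and hence belongs to $\kappa\mhyp\BP(X)$. For the ``only if'' part, assume $\{x\} \in \kappa\mhyp\BP(X)$; if $\{x\}$ is nowhere dense we are done, so suppose the open set $V := \int(\clo{\{x\}})$ is nonempty. The plan is to show $\{x\} \in \G{<\kappa}(X)$ by mimicking the proof of Theorem~\ref{t:Borel}, with the nowhere-dense layer of the algebra handled by two preliminary observations about $V$.

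First, $x \in V$: picking any $y \in V \subset \clo{\{x\}}$, the set $V$ is itself an open neighborhood of $y$ and so must contain $x$. Second, for every nowhere dense $N \subset X$ we have $x \notin \clo N$: since $\clo N$ has empty interior, $V \not\subset \clo N$, so pick $y \in V \setminus \clo N$ and an open neighborhood $W$ of $y$ with $W \cap N = \emptyset$; because $y \in \clo{\{x\}}$, $W$ contains $x$, witnessing $x \notin \clo N$.

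With these in hand, stratify $\kappa\mhyp\BP(X) = \bigcup_\alpha \kappa\mhyp\BP_\alpha(X)$, taking as base $\ClosedOrOpen(X) \cup \Nwd(X) \cup \{X \setminus N : N \in \Nwd(X)\}$ (which is closed under complement, so the stratification does exhaust $\kappa\mhyp\BP(X)$), and at each successor stage adding all unions and intersections of families of cardinality less than $\kappa$ drawn from earlier stages, exactly as in the proof of Theorem~\ref{t:Borel}. Prove by transfinite induction the statement $(*_\alpha)$: \emph{for every $E \in \kappa\mhyp\BP_\alpha(X)$ with $x \in E$ there exists $G \in \G{<\kappa}(X)$ with $x \in G \subset E$}. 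The four base cases are handled as follows: if $E$ is open, take $G = E$; if $E$ is closed, take $G = V$, since by the first observation $x \in V \subset \clo{\{x\}} \subset E$; if $E$ is nowhere dense, the case is vacuous, since the second observation forces $x \notin E$; and if $E = X \setminus N$ with $N$ nowhere dense, take $G = W$ for the open neighborhood of $x$ disjoint from $N$ produced by the second observation. The successor step for $E = \bigcup\F$ and $E = \bigcap\F$ is verbatim that of Theorem~\ref{t:Borel}, using the regularity of $\kappa$ in the intersection case to amalgamate the witnessing open families. Applying $(*_\alpha)$ to $E = \{x\}$ yields $\{x\} = G \in \G{<\kappa}(X)$.

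The one substantive new ingredient is the second observation above; it shows that once $\{x\}$ fails to be nowhere dense the nowhere-dense ``layer'' of the algebra becomes trivial around $x$, reducing the induction to a direct analogue of the Borel case. Everything else is bookkeeping parallel to Theorem~\ref{t:Borel}.
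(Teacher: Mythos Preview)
Your proof is correct, but it takes a genuinely different route from the paper's. The paper does not stratify $\kappa\mhyp\BP(X)$ at all; instead it proves directly that $\kappa\mhyp\BP(X)$ coincides with the algebra $\A=\{A\subset X:\exists\,U\in\tau,\ A\symdiff U\in\I\}$, where $\I$ is the $\kappa$-additive ideal generated by the nowhere dense sets, and then argues by a short case analysis on the witnessing open set $U$ for the singleton $\{x\}$: if $x\notin U$ the singleton is nowhere dense, and if $x\in U$ then $U\setminus\{x\}$ is a union of fewer than $\kappa$ nowhere dense sets $N$, so either some $\clo N$ catches $x$ (and $\{x\}$ is nowhere dense) or $\{x\}=U\setminus\bigcup_N\clo N$ is $G_{<\kappa}$. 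Your approach instead reuses the transfinite-hierarchy machinery of Theorem~\ref{t:Borel}, powered by the neat observation that once $\int(\clo{\{x\}})\ne\emptyset$ the point $x$ lies outside the closure of \emph{every} nowhere dense set; this collapses the nowhere-dense generators to the open/closed case and lets the induction go through with $G_{<\kappa}$-sets (rather than $\Closed\Meets\G{<\kappa}$-sets) as witnesses. The paper's argument buys an explicit structural description of the whole algebra $\kappa\mhyp\BP(X)$, which is of independent interest; your argument is more tailored to singletons but is pleasantly parallel to the Borel case and isolates a reusable lemma about non-nowhere-dense singletons.
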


\begin{proof} The ``if'' part is trivial. To prove the ``only if'' part, assume that $\{x\}\in \kappa\mhyp\BP(X)$.

Let $\I$ be the ideal of sets that can be covered by $<\kappa$ many nowhere
dense sets in $X$. The regularity of the cardinal $\kappa$ implies the
$\kappa$-additivity of the ideal $\I$. Let $\A$ denote the family of sets
$A\subset X$ for which there exists an open set $U_A\subset X$ such that the
symmetric difference $A\symdiff U_A$ belongs to the ideal $\I$.

We claim that $\A$ is an algebra of sets in $X$. Indeed, for any set $A\in\A$ find an open set $U\in\tau$ with $A\symdiff U\in\I$ and observe that for the open set $V:=X\setminus\clo{U}$ we get $(X\setminus A)\symdiff V\subset (A\symdiff U)\cup (\clo{U}\setminus U)\in\I$, which implies that $X\setminus A\in\A$.

Given a family $\F\subset\A$ of cardinality $|\F|<\kappa$, for each set $F\in\F$ find an open set $U_F\subset X$ with $F\symdiff U_F\in\I$ and observe that for the open set $U:=\bigcup_{F\in\F}U_F$ we get $(\bigcup\F)\symdiff U\subset \bigcup_{F\in\F}(F\symdiff U_F)\in\I$. Therefore, $\A$ is a $\kappa$-additive algebra of sets in $X$, containing all open sets and all nowhere dense sets in $X$. Taking into account that $\kappa\mhyp\BP(X)$ is the smallest $\kappa$-additive algebra with this property, we conclude that $\kappa\mhyp\BP(X)\subset \A$. The reverse inclusion $\A\subset\kappa\mhyp\BP(X)$ is trivial.
\smallskip

Therefore, $\{x\}\in\kappa\mhyp\BP(X)=\A$ and hence $\{x\}\symdiff U\in\I$ for some open set $U\subset X$. If $x\notin U$, then $\{x\}\symdiff U=\{x\}\cup U\in\I$, which implies that $\{x\}\in \I$ and hence $\{x\}$ is nowhere dense in $X$. So, we assume that $x\in U$. In this case $\{x\}\symdiff U=U\setminus\{x\}\in\I$, so $U\setminus\{x\}=\bigcup\mathcal N$ for some family $\mathcal N$ of nowhere dense sets in $X$ of cardinality $|\mathcal N|<\kappa$. If for some set $N\in\mathcal N$ the closure $\clo{N}$ contains $x$, then the singleton $\{x\}\subset \clo{N}$ is nowhere dense in $X$. In the opposite case the singleton $\{x\}=U\setminus \bigcup_{N\in\mathcal N}\clo{N}$ is a $G_{<\kappa}$-set in $X$.
\end{proof}

It is an easy observation that an isolated point of a dense subset of a topological space is also isolated in the whole space.
This needs a weak separation hypothesis -- $T_1$ is enough.
It turns out that this property is equivalent to the axiom $T_{\w\mhyp\BP}$.

\begin{proposition}
	Let $X$ be a topological space. Every isolated point of every dense subset is isolated in $X$ if and only if $X$ is a $T_{\w\mhyp\BP}$-space.
	
	\begin{proof}
		First, suppose that $X$ is $T_{\w\mhyp\BP}$, $D \subset X$ is dense, and $x$ is an isolated point of $D$.
		There is an open set $U \subset X$ such that $U \cap D = \{x\}$.
		If $\{x\}$ is open in $X$, we are done. Otherwise, $\{x\}$ is nowhere dense in $X$, and so $U \not\subset \clo{\{x\}}$.
		But then $U \setminus \clo{\{x\}}$ is a nonempty open set disjoint with $D$, which is impossible.
		
		On the other hand, suppose that every isolated point of every dense subset is isolated in $X$, and let $x \in X$.
		We put $D := (X \setminus \clo{\{x\}}) \cup \{x\}$. Clearly, $D$ is dense.
		If $\{x\}$ is not nowhere dense in $X$, then there is a nonempty open set $U \subset \clo{\{x\}}$, which witnesses that $x$ is isolated in $D$. It follows from the assumption that $x$ is isolated in $X$.
	\end{proof}
\end{proposition}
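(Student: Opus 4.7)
The plan is to reduce the statement to Theorem~\ref{t:BP} applied to $\kappa=\w$. Since a $G_{<\w}$-set is a finite intersection of open sets and hence itself open, the axiom $T_{\w\mhyp\BP}$ is equivalent to requiring that for each $x \in X$ the singleton $\{x\}$ is either nowhere dense or open. The proof will then split into the two implications between this dichotomy and the ``isolated points of dense subsets'' property.

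For the forward direction, I would start with a dense set $D \subset X$ and a point $x$ isolated in $D$, witnessed by an open $U \subset X$ with $U \cap D = \{x\}$ and $x \in U$. The case $\{x\}$ open is immediate. Otherwise $\{x\}$ is nowhere dense, so $\clo{\{x\}}$ has empty interior, and hence the nonempty open set $U$ cannot lie inside $\clo{\{x\}}$; then $U \setminus \clo{\{x\}}$ is a nonempty open set which must meet $D$ by density, producing a point of $D \cap U$ distinct from $x$ -- the desired contradiction.

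For the reverse direction, given $x \in X$ I need to show that $\{x\}$ is either nowhere dense or open, so it suffices to assume $\{x\}$ is not nowhere dense and conclude that $x$ is isolated in $X$. The strategy is to exhibit a cleverly chosen dense set in which $x$ is isolated and then invoke the hypothesis. A natural candidate is $D := (X \setminus \clo{\{x\}}) \cup \{x\}$, which is automatically dense because its closure contains the open set $X \setminus \clo{\{x\}}$ together with the point $x$, and therefore also the closed set $\clo{\{x\}}$. If $\{x\}$ is not nowhere dense, pick a nonempty open $U \subset \clo{\{x\}}$; since $U$ is disjoint from $X \setminus \clo{\{x\}}$, the intersection $U \cap D$ can contain only $x$, and density of $D$ forces it to be nonempty, so $U \cap D = \{x\}$. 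This shows $x$ is isolated in $D$, hence isolated in $X$ by the hypothesis.

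The main obstacle is finding the right dense set $D$ in the reverse direction: one needs a set that simultaneously is dense in $X$ and whose intersection with any neighbourhood small enough to fit inside $\clo{\{x\}}$ collapses to $\{x\}$. The construction $D = (X \setminus \clo{\{x\}}) \cup \{x\}$ is essentially forced by these two requirements and makes both verifications immediate.
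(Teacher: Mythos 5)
Your proof is correct and follows essentially the same route as the paper's: both directions use the characterization of $T_{\w\mhyp\BP}$ from Theorem~\ref{t:BP} (singletons are nowhere dense or open), the forward direction derives the same contradiction from the nonempty open set $U \setminus \clo{\{x\}}$ missing the dense set $D$, and the reverse direction uses the identical witness $D = (X \setminus \clo{\{x\}}) \cup \{x\}$. Your write-up merely spells out a few details (density of $D$, why $U \cap D = \{x\}$) that the paper leaves implicit.
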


\section{Implications and examples}

Because of the characterizations of the properties $T_{\kappa\mhyp\Borel}$ and $T_{\kappa\mhyp\BP}$ in the previous theorems, and because we have already considered the property $T_{\Closed\Or\Open}$, it makes sense to consider also the properties $T_\ClosedOrG{<\kappa}$ for uncountable regular cardinals $\kappa$ and $T_\ClosedOrG{\infty}$.
By Theorem~\ref{t:BP}, the axioms $T_{\omega\mhyp\BP}$ and $T_\OpenOrNwd$ are equivalent. We introduce also the axiom $T_\ClosedOrNwd$, which generalizes both $T_\Nwd$ and $T_\Closed$, and which is stronger than $T_\OpenOrNwd$ since every closed singleton that is not nowhere dense is necessarily open.
Finally, we denote the family of all regular open subsets of a topological space $X$ by $\RO(X)$, and we introduce the axioms $T_\ClosedOrRO$, $T_\ClosedMeetsRO$, and $T_\NwdOrRO$. These axioms naturally fit the diagram, and they are related to some known separation axioms. See Section~\ref{s:connections} for details.

All these additional properties make Diagram~\ref{diag_1} more complete. The resulting diagram with all implications follows ($\kappa$ is an arbitrary uncountable regular cardinal):
\begin{equation} \label{diag_2}
\xymatrix{
	*+\txt{$T_1$, $T_\Closed$} \ar@{=>}[rr] \ar@{=>}[d] &
	&
	*+\txt{$T_\ClosedOrNwd$} \ar@{=>}[d] &
	\\
	T_\ClosedOrRO \ar@{=>}[r] \ar@{=>}[d] &
	T_\ClosedMeetsRO \ar@{=>}[r] \ar@{=>}[d] &
	T_\NwdOrRO \ar@{=>}[d] &
	\\
	*+\txt{$T_\ClosedOrOpen$, \\ $T_\ClosedOrG{<\w}$} \ar@{=>}[r] \ar@{=>}[d] &
	*+\txt{$T_\Constructible$, \\ $T_{\w\mhyp\Borel}$} \ar@{=>}[r] \ar@{=>}[d] &
	*+\txt{$T_\OpenOrNwd$, \\ $T_{\w\mhyp\BP}$} \ar@{=>}[d] &
	\\
	*+\txt{$T_\ClosedOrG{\delta}$, \\ $T_\ClosedOrG{<\w_1}$} \ar@{=>}[r] \ar@{=>}[d] &
	*+\txt{$T_\Borel$, \\ $T_{\w_1\mhyp\Borel}$} \ar@{=>}[r] \ar@{=>}[d] &
	*+\txt{$T_\BP$, \\ $T_{\w_1\mhyp\BP}$} \ar@{=>}[d] &
	\\
	T_\ClosedOrG{<\kappa} \ar@{=>}[r] \ar@{=>}[d] &
	T_{\kappa\mhyp\Borel} \ar@{=>}[r] \ar@{=>}[d] &
	T_{\kappa\mhyp\BP} \ar@{=>}[d] &
	\\
	*+\txt{$T_\ClosedOrG{\infty}$} \ar@{=>}[r] &
	*+\txt{$T_0$, $T_{\infty\mhyp\Borel}$} \ar@{=>}[r] &
	*+\txt{$T_{\infty\mhyp\BP}$} &
}
\end{equation}
Note that the \tr{rows}{columns} of the diagram correspond to the axiom patterns $T_{\Closed\Or\bullet}$, $T_{\Closed\Meets\bullet}$, and $T_{\Nwd\Or\bullet}$, while the \tr{columns}{rows} correspond to $\bullet$ ranging $\Closed$ (or equivalently $\Clopen$), $\RO$, $\Open$, $\G{\delta}$, $\G{<\kappa}$, and $\G{\infty}$.

The implications are clear or easily follow from what has already been proved. We comment just on $T_\ClosedMeetsRO \implies T_\NwdOrRO$. If we have $\{x\} = F \cap U$ where $F$ is closed and $U$ is regular open and the singleton $\{x\}$ is not nowhere dense, then we also have $\{x\} = \int(\clo{\{x\}}) \cap U$, and this is a regular open set.
\medskip

A topological space $X$ is {\em symmetric} if for any points $x,y\in X$ the
existence of an open set $U_x\subset X$ containing $x$ but not $y$ is
equivalent to the existence of an open set $U_y\subset X$ containing $y$ but
not $x$.
It follows that a topological space is $T_1$ if and only if is it $T_0$ and symmetric.
Similarly, it can be proved that a topological space is $T_\ClosedOrNwd$ if it is $T_{\infty\mhyp\BP}$ and symmetric.
Hence, for symmetric spaces Diagram~\ref{diag_2} collapses \tr{horizontally}{vertically}.

A topological space $X$ is {\em subfit} if for every open sets $U, V \subset X$ such that $U \not\subset V$ there is an open set $W \subset X$ such that $U \cup W = X \neq V \cup W$. This property clearly depends only on the lattice of open sets, and is in fact a separation axiom considered in point-free topology \cite[V.1]{Picado_Pultr_12} -- it is a weaker but point-free variant of $T_1$. A topological space is subfit if and only if for every point $x \in X$ and every its neighborhood $U$ there is $y \in \clo{\{x\}}$ such that $\clo{\{y\}} \subset U$.

We are interested in the subfit condition since a topological space is $T_1$ if and only if it is $T_\Constructible$ and subfit \cite[V.1.1]{Picado_Pultr_12}. Similarly, it can be proved that a topological space is $T_\ClosedOrNwd$ if it is $T_{\w\mhyp\BP}$ and subfit.
Hence, for subfit spaces we have a \tr{horizontal}{vertical} collapse of Diagram~\ref{diag_2} just for the \tr{left}{top} part.
This suggests the following definition.

\begin{definition}
	Let $\kappa$ be an infinite cardinal. We say that a topological space is $\kappa$-subfit if for every $\G{<\kappa}$-set $U \subset X$ and an open set $V \subset X$ such that $U \not\subset V$ there is an open set $W \subset X$ such that $U \cup W = X \neq V \cup W$. Equivalently, if for every $x \in X$ and every $\G{<\kappa}$-set $U \ni x$ there is $y \in \clo{\{x\}}$ such that $\clo{\{y\}} \subset U$.
	Analogously, we define $\RO$-subfit spaces and $\infty$-subfit spaces -- the set $U$ is supposed to be regular open or $\G{\infty}$ rather than $\G{<\kappa}$.
	
	This way we obtain a sequence of subfitness conditions, one for each \tr{column}{row} of Diagram~\ref{diag_2}. The subfitness condition for the first \tr{column}{row}, where the set $U$ would be closed or equivalently clopen, is trivial.
	Clearly, a topological space is subfit if and only if it is $\w$-subfit.
	Also, we have the implications $\infty$-subfit $\implies$ $\kappa$-subfit $\implies$ $\w$-subfit $\implies$ $\RO$-subfit.
\end{definition}

\begin{proposition} \label{t:symmetric_subfit}
	A topological space $X$ is symmetric if and only if it is $\infty$-subfit if and only if it is hereditarily subfit.
	
	\begin{proof}
		It is easy to see that $X$ is symmetric if for every $x \in X$ and every its neighborhood $U$ we have $\clo{\{x\}} \subset U$. It follows that $\clo{\{x\}} \subset U_x := \bigcap\{U \subset X$ open $: x \in U\}$, which is the smallest $\G{\infty}$-subset containing $x$, and hence $X$ is $\infty$-subfit and in particular subfit. Since being symmetric is clearly a hereditary property, we also have that $X$ is hereditarily subfit.
		
		On the other hand, suppose that $X$ is $\infty$-subfit. Let $x \in X$ and let $U_x$ be as above. By $\infty$-subfitness there is $y \in \clo{\{x\}}$ such that $\clo{\{y\}} \subset U_x$. We have $x \in \clo{\{y\}}$ since otherwise $U_x \setminus \clo{\{y\}}$ would be a $\G{\infty}$-set containing $x$ strictly smaller than $U_x$. We have $\clo{\{x\}} \subset U_x$, and $X$ is symmetric.
		
		Finally, suppose that $X$ is hereditarily subfit. Let $x \in X$ and let $U$ be an open neighborhood of $x$. We consider the subspace $Y := \{x\} \cup (X \setminus U)$. We have that $x$ is an isolated point of a subfit space $Y$, and so $\{x\}$ is closed in $Y$. This is because from subfitness we have $y \in \{x\}$ such that $\clo{\{y\}} \cap Y \subset \{x\}$. It follows that $\clo{\{x\}} \subset U$, and $X$ is symmetric.
	\end{proof}
\end{proposition}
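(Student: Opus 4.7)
The plan is to establish the three equivalences in a cycle: symmetric $\Rightarrow$ $\infty$-subfit $\Rightarrow$ symmetric, and symmetric $\Leftrightarrow$ hereditarily subfit. The central object throughout will be the set
\[
U_x := \bigcap\{U \subset X \text{ open} : x \in U\},
\]
which is the smallest $\G{\infty}$-set containing $x$. The key reformulation of symmetry that I would use from the start is: $X$ is symmetric iff $\clo{\{x\}} \subset U$ for every $x \in X$ and every open neighborhood $U$ of $x$, equivalently iff $\clo{\{x\}} \subset U_x$ for every $x$.

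For symmetric $\Rightarrow$ $\infty$-subfit, I would observe that if $\clo{\{x\}} \subset U_x$, then for any $\G{\infty}$-set $U \ni x$ we have $U_x \subset U$ and so $y := x$ itself witnesses $y \in \clo{\{x\}}$ with $\clo{\{y\}} \subset U$. For the reverse direction, I would apply $\infty$-subfitness to $x$ and the $\G{\infty}$-set $U_x$ to get some $y \in \clo{\{x\}}$ with $\clo{\{y\}} \subset U_x$. The step I expect to be slightly delicate is showing $x \in \clo{\{y\}}$: if not, then $U_x \setminus \clo{\{y\}}$ would be a $\G{\infty}$-set containing $x$ strictly smaller than $U_x$, contradicting minimality. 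Once $x \in \clo{\{y\}}$, we get $\clo{\{x\}} \subset \clo{\{y\}} \subset U_x$, and symmetry follows.

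For symmetric $\Rightarrow$ hereditarily subfit, I would simply note that symmetry is inherited by subspaces (the defining condition uses only the trace of the topology on pairs of points) and that $\infty$-subfit trivially implies subfit, so by the first equivalence every subspace of a symmetric space is subfit.

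The more interesting converse, hereditarily subfit $\Rightarrow$ symmetric, proceeds by fixing $x \in X$ and an open neighborhood $U$ of $x$, and studying the subspace $Y := \{x\} \cup (X \setminus U)$, in which $x$ is an isolated point because $Y \cap U = \{x\}$. Applying subfitness of $Y$ to the point $x$ and the open neighborhood $\{x\}$ of $x$ in $Y$, I obtain $y \in \clo{\{x\}}^Y$ with $\clo{\{y\}}^Y \subset \{x\}$, forcing $y = x$ and $\{x\}$ closed in $Y$; equivalently $\clo{\{x\}}^X \cap Y = \{x\}$, which precisely says $\clo{\{x\}}^X \subset U$. The main obstacle is just picking the right subspace $Y$ so that subfitness, a very weak condition on its own, forces the strong conclusion $\clo{\{x\}} \subset U$; everything else is manipulation of closures and the minimality property of $U_x$.
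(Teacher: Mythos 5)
Your proposal is correct and follows essentially the same route as the paper's proof: the same reformulation of symmetry via $\clo{\{x\}}\subset U_x$ for the smallest $\G{\infty}$-set $U_x$, the same minimality argument showing $x\in\clo{\{y\}}$, and the same subspace $Y=\{x\}\cup(X\setminus U)$ for the hereditarily subfit direction. No gaps.
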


Now we prove a general proposition on \tr{horizontal}{vertical} collapses of the diagram.

\begin{proposition} \label{t:kappa_subfit}
	Let $X$ be a topological space and let $\kappa$ be an infinite regular cardinal.
	\begin{enumerate}
		\item $X$ is $T_1$ if and only if it is $T_{\kappa\mhyp\Borel}$ and $\kappa$-subfit, if and only if it is $T_{\infty\mhyp\Borel}$ and $\infty$-subfit, if and only if it is $T_\ClosedMeetsRO$ and $\RO$-subfit.
		\item $X$ is $T_\ClosedOrNwd$ if it is $T_{\kappa\mhyp\BP}$ and $\kappa$-subfit, or $T_{\infty\mhyp\BP}$ and $\infty$-subfit, or $T_{\NwdOrRO}$ and $\RO$-subfit.
	\end{enumerate}
	
	\begin{proof}
		Clearly, if $X$ is $T_1$, then it has all the other properties.
		On the other hand, suppose that $\{x\} = F \cap U$ for a closed set $F \subset X$ and a $\G{<\kappa}$-set $U \subset X$. By $\kappa$-subfitness there is a point $y \in \clo{\{x\}} \subset F$ such that $\clo{\{y\}} \subset U$. We have $\clo{\{y\}} \subset F \cap U = \{x\}$, and hence $y = x$ and $\clo{\{x\}} = \{x\}$.
		It follows that a $T_{\kappa\mhyp\Borel}$ and $\kappa$-subfit space is $T_1$.
		
		Finally, suppose that $X$ is $T_{\kappa\mhyp\BP}$ and $\kappa$-subfit. Let $x \in X$.
		If $\{x\}$ is nowhere dense, we are done. If $\{x\}$ is a $\G{<\kappa}$-set, then by the same argument as above, $\{x\}$ is closed since $X$ is $\kappa$-subfit. Together, $X$ is $T_\ClosedOrNwd$.
		
		The proofs for the $\infty$ and $\RO$ cases are analogous.
	\end{proof}
\end{proposition}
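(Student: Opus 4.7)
The plan is to handle both parts uniformly using the characterizations from Theorem~\ref{t:Borel} and Theorem~\ref{t:BP}, together with the second equivalent form of the subfit conditions (``for every $x$ and every $U \ni x$ of the appropriate type there is $y \in \clo{\{x\}}$ with $\clo{\{y\}} \subset U$''). The forward direction in (1) is immediate: every $T_1$-space is $T_\ClosedMeetsRO$ (take $F = \{x\}$, $U = X$), hence $T_{\kappa\mhyp\Borel}$ and $T_{\infty\mhyp\Borel}$; and any $T_1$-space is trivially $\RO$-, $\kappa$-, and $\infty$-subfit since $\clo{\{x\}} = \{x\}$ always works.

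For the non-trivial direction of (1), assume $X$ is $T_{\kappa\mhyp\Borel}$ and $\kappa$-subfit, and fix $x \in X$. By Theorem~\ref{t:Borel}, $\{x\} = F \cap U$ for some closed $F$ and some $G_{<\kappa}$-set $U$. Note $x \in U$, so $\kappa$-subfitness yields $y \in \clo{\{x\}}$ with $\clo{\{y\}} \subset U$. But $x \in F$ and $F$ closed give $\clo{\{x\}} \subset F$, hence $y \in F$ and $\clo{\{y\}} \subset F \cap U = \{x\}$. Therefore $y = x$ and $\clo{\{x\}} = \{x\}$, so $X$ is $T_1$. The $\infty$-case replaces $G_{<\kappa}$ by $G_\infty$ and uses $\infty$-subfitness identically; the $\RO$-case uses that $\{x\} = F \cap U$ with $U$ regular open (from $T_\ClosedMeetsRO$) and invokes $\RO$-subfitness, the argument otherwise being the same.

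For (2), assume $X$ is $T_{\kappa\mhyp\BP}$ and $\kappa$-subfit, and fix $x \in X$. By Theorem~\ref{t:BP}, either $\{x\}$ is nowhere dense (and we are done) or $\{x\}$ is a $G_{<\kappa}$-set. In the second case, $\kappa$-subfitness applied to $U := \{x\}$ gives $y \in \clo{\{x\}}$ with $\clo{\{y\}} \subset \{x\}$, forcing $y = x$ and $\clo{\{x\}} = \{x\}$, which is even stronger than needed: $\{x\}$ is closed. Thus $X$ is $T_\ClosedOrNwd$. The $\infty$- and $\RO$-variants proceed identically, using the characterization-free observation that $\{x\}$ itself is then an $\infty$- or regular open set, respectively (and in the $\RO$-case one uses that a $G_\infty$-set equal to a singleton can be replaced by $\int(\clo{\{x\}})$ regular open, as noted just after Diagram~\ref{diag_2}).

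There is no real obstacle here; the one point that deserves care is ensuring that the set $U$ appearing in the decomposition of $\{x\}$ is of the right type (regular open, $G_{<\kappa}$, or $G_\infty$) for the corresponding subfitness hypothesis to apply. This is guaranteed by the separation axioms chosen in each case, so the whole argument amounts to combining the characterization theorems with one application of subfitness.
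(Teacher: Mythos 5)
Your proposal is correct and follows essentially the same route as the paper: both use the characterizations from Theorems~\ref{t:Borel} and~\ref{t:BP} to write $\{x\}=F\cap U$ (or to conclude $\{x\}$ is nowhere dense or of the appropriate $G$-type), then apply the relevant subfitness condition to $U$ to produce $y\in\clo{\{x\}}$ with $\clo{\{y\}}\subset\{x\}$, forcing $\{x\}$ to be closed. The only superfluous detail is your parenthetical about replacing a singleton by $\int(\clo{\{x\}})$ in the $\RO$-case of (2), which is unnecessary since $T_\NwdOrRO$ already hands you a regular open singleton directly.
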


\begin{corollary}
	A topological space $X$ is $T_1$ if and only if every its subspace is $T_{\infty\mhyp\BP}$ and subfit.
	
	\begin{proof}
		If $X$ is hereditarily $T_{\infty\mhyp\BP}$, then by Proposition~\ref{p:closed_hereditarily_Borel}, which we prove later, $X$ is $T_0$. If $X$ is hereditarily subfit, then by Proposition~\ref{t:symmetric_subfit} it is symmetric. Together, $X$ is $T_1$.
		The other implication is clear.
	\end{proof}
\end{corollary}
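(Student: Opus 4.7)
The forward direction is immediate: if $X$ is $T_1$, then every subspace is $T_1$, hence in particular every subspace is $T_{\infty\mhyp\BP}$ (since closed sets lie in $\infty\mhyp\BP$) and subfit (by Proposition~\ref{t:symmetric_subfit}, as $T_1$ spaces are symmetric, equivalently hereditarily subfit). So the work lies in the converse.

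For the converse, the plan is to decompose $T_1$ as $T_0 + \text{symmetric}$ and extract one summand from each half of the hereditary hypothesis. The symmetry half is already packaged: Proposition~\ref{t:symmetric_subfit} asserts that being symmetric is equivalent to being hereditarily subfit, so the assumption that every subspace of $X$ is subfit immediately gives that $X$ is symmetric.

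The other half is to show that hereditary $T_{\infty\mhyp\BP}$ forces $T_0$, and here I rely on the forward-referenced Proposition~\ref{p:closed_hereditarily_Borel}. The underlying reason is transparent using Theorem~\ref{t:BP}: if $X$ failed $T_0$, choose distinct points $x, y$ with $\clo{\{x\}} = \clo{\{y\}}$. The subspace $Y := \{x, y\}$ then carries the indiscrete topology, in which the singleton $\{x\}$ has $\clo{\{x\}} = Y$ with nonempty interior (so $\{x\}$ is not nowhere dense) and the only intersections of open sets in $Y$ are $\emptyset$ and $Y$ (so $\{x\}$ is not a $G_{<\infty}$-set). By Theorem~\ref{t:BP} (applied with $\kappa = |Y|^+$), the subspace $Y$ is not $T_{\infty\mhyp\BP}$, contradicting the hereditary hypothesis. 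Hence $X$ is $T_0$, equivalently $T_{\infty\mhyp\Borel}$ by Corollary~\ref{c:T0=TB}.

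Combining, $X$ is both symmetric and $T_0$, so $X$ is $T_1$. The main obstacle is essentially bookkeeping: the argument depends on the forward reference to Proposition~\ref{p:closed_hereditarily_Borel}, and in the final writeup I simply cite it rather than reprove the two-point-subspace observation above.
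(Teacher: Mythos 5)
Your proposal is correct and follows the paper's proof essentially verbatim: decompose $T_1$ as $T_0$ plus symmetric, obtain symmetry from hereditary subfitness via Proposition~\ref{t:symmetric_subfit}, and obtain $T_0$ from hereditary (indeed closed-hereditary) $T_{\infty\mhyp\BP}$ via Proposition~\ref{p:closed_hereditarily_Borel}. Your supplementary two-point indiscrete subspace argument is a valid sanity check of the latter step (modulo the cosmetic point that $|Y|^+$ is finite, so one should invoke Theorem~\ref{t:BP} with any infinite regular $\kappa$), but since you ultimately cite the same proposition, the route is the same as the paper's.
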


\begin{remark}
	Later in this section we will distinguish between the subfitness axioms (Example~\ref{e:kappa} and \ref{e:Sierpinski}).
	Also note that the other implication in Proposition~\ref{t:kappa_subfit} (2) does not hold, $T_\ClosedOrNwd$ does not imply even $\RO$-subfitness (Example~\ref{e:Khalimsky}).
\end{remark}

\begin{remark}
	While the subfitness is a point-free condition, i.e. it really depends only on the lattice of open sets, this is not the case with $\kappa$-subfitness and $\infty$-subfitness. The space from Example~\ref{e:kappa} is not $\kappa^+$-subfit. On the other hand, $\kappa$ with the cofinite topology has isomorphic lattice of open sets, but is even $T_1$.
	
	Being $\RO$-subfit is a point-free condition since $U \subset X$ is regular open if and only if $U = U^{**}$ where $U^* := X \setminus \clo{U}$ is the pseudocomplement of $U$ in the lattice of open sets.
	
	There is also a point-free condition of being \emph{fit} \cite[V.1.2]{Picado_Pultr_12}. For topological spaces, this condition lies strictly between regularity and symmetry.
\end{remark}

Now we shall consider the \tr{vertical}{horizontal} nature of Diagram~\ref{diag_2}.
Recall that a topological space is \emph{nodec} if every its nowhere dense subset is closed.

\begin{proposition}
	Diagram~\ref{diag_2} collapses \tr{vertically}{horizontally} for nodec spaces. More precisely, let $X$ be a nodec space and let $\kappa$ be an infinite regular cardinal.
	\begin{enumerate}
		\item $X$ is $T_\ClosedOrNwd$ if and only if $X$ is $T_1$;
		\item $X$ is $T_\NwdOrRO$ if and only if $X$ is $T_\ClosedOrRO$;
		\item $X$ is $T_{\kappa\mhyp\BP}$ if and only if $X$ is $T_{\kappa\mhyp\Borel}$ if and only if $X$ is $T_\ClosedOrG{<\kappa}$;
		\item $X$ is $T_{\infty\mhyp\BP}$ is and only if $X$ is $T_{\infty\mhyp\Borel}$ if and only if $X$ is $T_\ClosedOrG{\infty}$.
	\end{enumerate} 	
	\begin{proof}
		Is trivial or follows easily from the characterization in Theorem~\ref{t:BP}.
	\end{proof}
\end{proposition}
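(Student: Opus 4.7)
The plan is to reduce everything to one observation: in a nodec space, every nowhere dense set is closed, so in particular every nowhere dense singleton is closed. Combined with the characterizations from Theorem~\ref{t:BP} and the implications already recorded in Diagram~\ref{diag_2}, each of the four items becomes a short bookkeeping argument. In every case the nontrivial direction is exactly the one that trades a ``nowhere dense'' clause for a ``closed'' clause via the nodec hypothesis.

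For (1), the implication $T_1 \Rightarrow T_\ClosedOrNwd$ is immediate from Diagram~\ref{diag_2}. For the converse, if $X$ is nodec and $T_\ClosedOrNwd$, then for every $x \in X$ the singleton $\{x\}$ is closed or nowhere dense; the latter case forces $\{x\}$ closed by nodec, so $X$ is $T_1$. For (2), Diagram~\ref{diag_2} already gives $T_\ClosedOrRO \Rightarrow T_\ClosedMeetsRO \Rightarrow T_\NwdOrRO$ without any extra hypothesis; conversely, if $X$ is nodec and $T_\NwdOrRO$, then a nowhere dense singleton is closed, so each $\{x\}$ is closed or regular open, witnessing $T_\ClosedOrRO$.

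Items (3) and (4) are analogous. The forward chain $T_\ClosedOrG{<\kappa} \Rightarrow T_{\kappa\mhyp\Borel} \Rightarrow T_{\kappa\mhyp\BP}$ (and the $\infty$ counterpart) is part of Diagram~\ref{diag_2}, so it suffices to close the loop by showing $T_{\kappa\mhyp\BP} \Rightarrow T_\ClosedOrG{<\kappa}$ in the nodec setting. For this I invoke Theorem~\ref{t:BP}: being $T_{\kappa\mhyp\BP}$ means every singleton is nowhere dense or a $G_{<\kappa}$-set, and the nodec hypothesis converts the first option into ``closed'', which is exactly $T_\ClosedOrG{<\kappa}$. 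For (4), I note that $\infty\mhyp\BP(X) = |X|^+\mhyp\BP(X)$ with $|X|^+$ regular, so Theorem~\ref{t:BP} applies at $\kappa = |X|^+$ and the same argument goes through with $\G{<|X|^+}(X) = \G{\infty}(X)$.

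I do not anticipate any genuine obstacle; the only point meriting a one-line remark is the reduction to a regular cardinal in (4), and that is handled exactly as in Remark~\ref{t:regular}. This matches the author's own assessment that the proof ``is trivial or follows easily from the characterization in Theorem~\ref{t:BP}''.
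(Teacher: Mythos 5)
Your proof is correct and fills in exactly the argument the paper leaves implicit: the single observation that in a nodec space a nowhere dense singleton is closed, combined with Theorem~\ref{t:BP} and the implications of Diagram~\ref{diag_2}, which is precisely what the authors mean by ``trivial or follows easily from the characterization in Theorem~\ref{t:BP}''. The handling of the $\infty$ case via $|X|^+$ is also the intended reduction.
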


Let us recall several kinds of nearly open sets. A subset $A$ of a topological space $X$ is called
\begin{itemize}
	\item \emph{semi-open} \cite{semi-open} if there is an open set $U$ such that $U \subset A \subset \clo{U}$, or equivalently if $A \subset \clo{\int(A)}$;
	\item \emph{pre-open} \cite{pre-open} if there is an open set $U$ such that $A \subset U \subset \clo{A}$, or equivalently if $A \subset \int(\clo{A})$;
	\item \emph{$\alpha$-open} \cite{alpha-open} if there are open sets $U$, $V$ such that $U \subset A \subset V \subset \clo{U}$, or equivalently if $A \subset \int(\clo{\int(A)})$;
	\item \emph{$\beta$-open} \cite{beta-open} or \emph{semi-preopen} \cite{semi-preopen} if there is an open set $U$ such that $A \subset \clo{U}$ and $U \subset \clo{A}$, or equivalently if $A \subset \clo{\int(\clo{A})}$.
\end{itemize}
Note that the notion of $\alpha$-open set it the strongest and the notion of $\beta$-open set is the weakest. Also, every open set is $\alpha$-open, $\alpha$-open sets are exactly sets that are both semi-open and pre-open, and pre-open sets are exactly intersections of open and dense sets.

Let $(X, \tau)$ be a topological space. Nj\r{a}stad showed in \cite{alpha-open} that the family of all $\alpha$-open sets forms a topology, denoted by $\tau^\alpha$. Moreover, the $\alpha$-open sets are exactly the sets of form $U \setminus N$ where $U$ is open and $N$ is nowhere dense, so $\tau^\alpha$ is generated by $\tau$ and by declaring the nowhere dense sets closed. Also, by doing this we do not introduce any new non-closed nowhere dense subsets, so the space $(X, \tau^\alpha)$ is nodec. Moreover, $(X, \tau)$ and $(X, \tau^\alpha)$ have the same regular open sets. This was proved in \cite[Propositions~3 and 6]{alpha-open}, even though Nj\r{a}stad used a different terminology.

\begin{proposition} \label{t:alpha}
	Let $(X, \tau)$ be a topological space, let $\tau^\alpha$ be the corresponding nodec modification, and let $\kappa$ be an infinite regular cardinal.
	\begin{enumerate}
		\item $(X, \tau)$ is $T_\ClosedOrNwd$ if and only if $(X, \tau^\alpha)$ is $T_1$;
		\item $(X, \tau)$ is $T_\NwdOrRO$ if and only if $(X, \tau^\alpha)$ is $T_\ClosedMeetsRO$ or equivalently $T_\ClosedOrRO$;
		\item $(X, \tau)$ is $T_{\kappa\mhyp\BP}$ if and only if $(X, \tau^\alpha)$ is $T_{\kappa\mhyp\Borel}$ or equivalently $T_\ClosedOrG{<\kappa}$;
		\item $(X, \tau)$ is $T_{\infty\mhyp\BP}$ is and only if $(X, \tau^\alpha)$ is $T_{\infty\mhyp\Borel}$ or equivalently $T_\ClosedOrG{\infty}$.
	\end{enumerate}
 	
	\begin{proof}
		First, observe that since every $\alpha$-closed set is the union of a closed set and a nowhere dense set, any singleton is $\alpha$-closed if and only if it is closed or nowhere dense or equivalently clopen or nowhere dense. This gives us the first equivalence.
		
		The second equivalence follows from the previous observation and from the fact that a singleton $\{x\}$ is regular open in $\tau$ if and only if it is regular open in $\tau^\alpha$.
		
		For the third equivalence we use the characterization in Theorem~\ref{t:BP}. If $\{x\}$ is nowhere dense or $G_{<\kappa}$, then it is clearly closed or $G_{<\kappa}$ in $\tau^\alpha$. For the other implication let $\{x\}$ be closed or $G_{<\kappa}$ in $\tau^\alpha$. If it is closed in $\tau^\alpha$, we are done by the first observation.
		In the second case we have that $\{x\} = \bigcap_{\beta < \kappa} V_\beta$ where every set $V_\beta$ is open in $\tau^\alpha$, i.e.\ there is a set $U_\beta$ that is open in $\tau$ and a set $N_\beta$ that is nowhere dense in $\tau$ such that $V_\beta = U_\beta \setminus N_\beta$. We may suppose that the singleton $\{x\}$ is not nowhere dense, so we have $x \notin \clo{N_\beta}$. Hence, $\{x\} = \bigcap_{\beta < \kappa} U_\beta \setminus \clo{N_\beta}$, which is a $G_{<\kappa}$-set.
		
		The fourth equivalence follows from the third one.
	\end{proof}
\end{proposition}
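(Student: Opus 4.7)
My plan rests on three facts recalled in the excerpt: (a) the $\alpha$-closed sets of $\tau$ are precisely unions $F \cup N$ with $F$ closed and $N$ nowhere dense in $\tau$; (b) $(X,\tau)$ and $(X,\tau^\alpha)$ share the same regular open sets; (c) $(X,\tau^\alpha)$ is nodec. I would first establish the key singleton lemma: $\{x\}$ is $\alpha$-closed iff $\{x\}$ is closed or nowhere dense in $\tau$. One direction is trivial from (a); for the other, if $\{x\} = F' \cup N'$ with $F'$ closed and $N'$ nowhere dense, then either $\{x\} = N'$ is nowhere dense or $\{x\} = F'$ is closed. Moreover, a closed singleton that is not nowhere dense is automatically open, so $T_\ClosedOrNwd$ coincides with $T_\ClopenOrNwd$. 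This lemma gives (1) at once, and the "or equivalently" clauses in (2)--(4) follow because $(X,\tau^\alpha)$ is nodec, by the preceding proposition.

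For (2), I would note that the forward direction is immediate: if $\{x\}$ is nowhere dense in $\tau$, then by (c) it is closed in $\tau^\alpha$, so $\{x\} = \{x\} \cap X \in \Closed\Meets\RO$; if $\{x\}$ is regular open in $\tau$, then by (b) it is regular open in $\tau^\alpha$, and $\{x\} = X \cap \{x\}$. For the backward direction, suppose $\{x\} = F \cap U$ with $F$ $\alpha$-closed and $U$ regular open, and write $F = F' \cup N'$ with $F'$ closed and $N'$ nowhere dense. If $x \in N'$, then $\{x\} \subset \clo{N'}$ is nowhere dense. Otherwise $x \in F' \cap U \subset \{x\}$, so $\{x\} = F' \cap U$, and assuming $\{x\}$ is not nowhere dense, I would observe that $\int(\clo{\{x\}})$ is a nonempty open subset of $\clo{\{x\}}$; any point $y$ in it has $\{x\}$ meeting every neighborhood of $y$, so in particular $x \in \int(\clo{\{x\}})$. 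Then $\{x\} = \int(\clo{\{x\}}) \cap U$ is the intersection of two regular open sets, hence regular open, as required.

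For (3), I would invoke the characterization from Theorem~\ref{t:BP}, reducing the question to whether each singleton is nowhere dense or $G_{<\kappa}$ in $\tau$. The forward direction is immediate: nowhere dense singletons are $\alpha$-closed by the key lemma, and $G_{<\kappa}$-sets of $\tau$ remain $G_{<\kappa}$ in the finer topology $\tau^\alpha$. For the backward direction, write $\{x\} = \bigcap_{\beta<\kappa} V_\beta$ with $V_\beta = U_\beta \setminus N_\beta$, $U_\beta \in \tau$, and $N_\beta$ nowhere dense in $\tau$. Assume $\{x\}$ is not nowhere dense in $\tau$. The key observation is that $x \notin \clo{N_\beta}$ for every $\beta$: otherwise $\{x\} \subset \clo{N_\beta}$ would be nowhere dense. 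Hence each $U_\beta \setminus \clo{N_\beta}$ is an open set of $\tau$ containing $x$, and
\[
  \{x\} \subset \bigcap_{\beta<\kappa}\bigl(U_\beta \setminus \clo{N_\beta}\bigr) \subset \bigcap_{\beta<\kappa} U_\beta \setminus \bigcup_{\beta<\kappa} N_\beta = \bigcap_{\beta<\kappa} V_\beta = \{x\},
\]
exhibiting $\{x\}$ as a $G_{<\kappa}$-set of $\tau$. Assertion (4) is the same argument with the cardinality bound suppressed.

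The main obstacle is the backward direction of (3): the passage from the decomposition $V_\beta = U_\beta \setminus N_\beta$ (in $\tau^\alpha$) to a true $G_{<\kappa}$-representation in $\tau$ requires replacing each $N_\beta$ by its closure without losing $x$; this is exactly what the "$\{x\}$ not nowhere dense $\Rightarrow$ $x \notin \clo{N_\beta}$" step accomplishes, and the same device handles (2) and (4).
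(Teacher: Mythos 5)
Your proof is correct and follows essentially the same route as the paper's: the same key lemma that a singleton is $\alpha$-closed iff it is closed or nowhere dense in $\tau$, the same use of Theorem~\ref{t:BP} together with the nodec collapse for the ``or equivalently'' clauses, and the identical replacement of $N_\beta$ by $\clo{N_\beta}$ (justified by $\{x\}$ not being nowhere dense) in part (3). The only difference is that you spell out part (2) in more detail, using the $\int(\clo{\{x\}})\cap U$ trick that the paper itself invokes earlier for $T_\ClosedMeetsRO \implies T_\NwdOrRO$.
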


\begin{proposition} \label{t:meets}
	Let $X$ be a topological space.
	\begin{enumerate}
		\item $X$ is $T_1$ if and only if $X$ is $T_\ClosedOrNwd$ and $T_\ClosedOrOpen$;
		\item $X$ is $T_\ClosedOrRO$ if and only if $X$ is $T_\NwdOrRO$ and $T_\ClosedOrOpen$.
	\end{enumerate}
	\begin{proof}
		The ``only if'' part is trivial. For the ``if'' part let us assume that $X$ is $T_\ClosedOrNwd$ and $T_\ClosedOrOpen$. Then every singleton is either closed or both nowhere dense and open, but the latter case is contradictory. Similarly, if $X$ is $T_\NwdOrRO$ and $T_\ClosedOrOpen$, then every singleton is closed or regular open or both nowhere dense and open.
	\end{proof}
\end{proposition}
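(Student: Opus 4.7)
The plan is to treat both equivalences by the same recipe: the ``only if'' directions follow at once from Diagram~\ref{diag_2}, since $T_1$ implies every axiom appearing in it, and $T_\ClosedOrRO$ sits above both $T_\NwdOrRO$ and $T_\ClosedOrOpen$. For the ``if'' directions I would fix an arbitrary point $x\in X$, apply both hypothesized axioms to the singleton $\{x\}$, and combine the two resulting dichotomies by case analysis. The only nontrivial input driving the case analysis is the elementary observation that a nonempty open set is not nowhere dense, so the singleton $\{x\}$ can never be simultaneously open and nowhere dense.

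For part (1), assuming $X$ is $T_\ClosedOrNwd$ and $T_\ClosedOrOpen$, the singleton $\{x\}$ is closed or nowhere dense, and simultaneously closed or open. The only way $\{x\}$ fails to be closed is if it is both nowhere dense and open, which is excluded by the observation above; hence $\{x\}$ is closed and $X$ is $T_1$.

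For part (2), assuming $X$ is $T_\NwdOrRO$ and $T_\ClosedOrOpen$, the four possible combinations for $\{x\}$ are (nowhere dense, closed), (nowhere dense, open), (regular open, closed) and (regular open, open). The combination (nowhere dense, open) is again impossible, while in each of the remaining three cases $\{x\}$ lies in $\Closed(X)\cup\RO(X)$. Thus $X$ is $T_\ClosedOrRO$.

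I do not anticipate any real obstacle; the whole argument is transparent case analysis, parallel in spirit to the equivalence $T_{\frac12}\Leftrightarrow T_\ClosedOrOpen$ but applied one level further along Diagram~\ref{diag_2}. The only thing worth flagging is that exactly the same template would yield analogous ``meet = or + clopen/open'' statements for other rows of the diagram, which explains why these particular two equivalences are singled out.
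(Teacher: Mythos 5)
Your proposal is correct and follows essentially the same argument as the paper: both reduce the ``if'' directions to the observation that a singleton cannot be simultaneously open and nowhere dense, and then dispatch the remaining cases. Nothing is missing.
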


Next, we shall present some examples distinguishing the separation axioms
$T_\A$ for various algebras $\A$. First we observe that the separation axioms
in the Diagram~\ref{diag_2} are not trivial (i.e., fail for some topological
spaces).

\begin{example} The doubleton $A=\{0,1\}$ endowed with the anti-discrete topology $\{\emptyset,A\}$ fails to be a $T_{\infty\mhyp\BP}$-space.
\end{example}

\begin{example} Let $A=\{0,1\}$ be a doubleton endowed with the anti-discrete topology $\{\emptyset,A\}$, and $[0,1]$ be unit interval, endowed with the standard Euclidean topology. It is clear that $A\times[0,1]$ is not a $T_0$-space and hence fails to be a $T_{\infty\mhyp\Borel}$-space. On the other hand, the product $A\times [0,1]$ is a $T_\Nwd$-space.
\end{example}

\begin{example} \label{e:kappa}
Let $\kappa$ be a regular infinite cardinal. On the space $X=\kappa\cup\{\kappa\}$ consider the topology $$\tau=\{\emptyset\}\cup\{U\subset X:\kappa\in U \wedge |X\setminus U|<\w\}.$$ The topological space $(X,\tau)$ has the following properties:
\begin{enumerate}
\item $X$ is a compact $T_0$-space;
\item $X$ is a $T_\ClosedOrG{<\kappa^+}$-space and hence a $T_{\kappa^+\mhyp\Borel}$-space;
\item $X$ fails to be a $T_{\kappa\mhyp\BP}$-space;
\item $X$ is $\kappa$-subfit but not $\kappa^+$-subfit.
\item $X\times[0,1]$ is a compact $T_\Nwd$-space which is still a $T_\ClosedOrG{<\kappa^+}$-space but not a $T_{\kappa\mhyp\Borel}$-space.
\end{enumerate}
\end{example}

\begin{example} \label{e:Sierpinski}
	Recall that every partially ordered set $(P, \leq)$ induces the corresponding \emph{Alexandrov topology}. Open sets are precisely the upper sets, i.e., the sets $U \subset P$ such that $x \in U$ and $x \leq y$ implies $y \in U$.
	\begin{enumerate}
		\item The two-point Sierpi\'nski space $S_2 = \{0, 1\}$ with the Alexandrov topology $\{\{0, 1\}, \{1\}, \emptyset\}$ is $T_\ClosedOrOpen$ but not $T_\NwdOrRO$.
		Also, it is $\RO$-subfit but not subfit.
		\item The three-point analogue of the Sierpi\'nski space $S_3 =
\{0, 1, 2\}$ with the Alexandrov topology $\{\{0, 1, 2\},$ $ \{1, 2\},
\{2\}, \emptyset\}$ is $T_\Constructible$ but is neither
$T_\ClosedOrG{\infty}$ nor $T_\NwdOrRO$.
		\item The $\w$-analogue, i.e., $S_\w = \w$ with the Alexandrov
topology is $T_\Constructible$ and $T_{\Nwd}$ but not
$T_\ClosedOrG{\infty}$ and not $T_\ClosedMeetsRO$.
	\end{enumerate}
\end{example}

\begin{example} \label{e:Khalimsky}
	Recall that the set of all integers $\mathbb{Z}$ endowed with the topology generated by the sets $\{2k - 1,\, 2k,\, 2k + 1\}$ for $k \in \mathbb{Z}$ is called the \emph{Khalimsky line} or the \emph{digital line}.
	\begin{enumerate}
		\item The Khalimsky line is $T_\ClosedOrRO$ but not $T_\ClosedOrNwd$. Every odd singleton is regular open, while every even singleton is closed and nowhere dense.
		\item The subspace $\{-1, 0, 1\}$ of the Khalimsky line, which may be viewed as one open segment of the line, has the same properties, i.e.\ it is $T_\ClosedOrRO$ but not $T_\ClosedOrNwd$.
		\item The space $\{-1, 0, 1\} \times \mathbb{R}$ is $T_\ClosedMeetsRO$ and $T_\Nwd$, but not $T_1$ and so not $T_\ClosedOrNwd$ (\ref{t:meets}) and not $\RO$-subfit (\ref{t:kappa_subfit}).
		\item Let us consider the following attachment of two copies of the previous space: the set $\{-1, 0, 1_{-1}, 1_0, 1_1\}$ endowed with the topology generated by the sets $\{-1\}$, $\{1_{-1}, 1_0, 1_1\}$, $\{1_{-1}\}$, $\{1_1\}$. This space is $T_\ClosedMeetsRO$ but neither $T_\ClosedOrNwd$ nor $T_\ClosedOrG{\infty}$. The singletons $\{-1\}$, $\{1_{-1}\}$, $\{1_1\}$ are regular open, the singleton $\{0\}$ is closed, and the singleton $\{1_0\}$ is the intersection of the closed set $\{0, 1_0\}$ and the regular open set $\{1_{-1}, 1_0, 1_1\}$.
	\end{enumerate}
\end{example}

\section{Preservation properties}

Finally, we establish some hereditary properties of the separation axioms $T_\A$. The following is obvious.

\begin{proposition} Let $\kappa$ be an infinite cardinal. Any subspace of a $T_\ClosedOrG{<\kappa}$-space is a $T_\ClosedOrG{<\kappa}$-space. Any subspace of a $T_\ClosedOrG{\infty}$-space is a $T_\ClosedOrG{\infty}$-space.
\end{proposition}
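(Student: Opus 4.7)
The plan is to verify the statement directly from the definitions, since both properties ``closed'' and ``$G_{<\kappa}$-set'' behave well with respect to the subspace topology. Specifically, if $Y \subset X$, then the intersection of a closed set in $X$ with $Y$ is closed in $Y$, and the intersection of a $G_{<\kappa}$-set in $X$ with $Y$ is a $G_{<\kappa}$-set in $Y$ (since intersecting each of fewer than $\kappa$ open sets in $X$ with $Y$ produces fewer than $\kappa$ open sets in $Y$).

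First I would take an arbitrary $x \in Y$. Since $X$ is a $T_{\ClosedOrG{<\kappa}}$-space, the singleton $\{x\}$ is either closed in $X$ or a $G_{<\kappa}$-set in $X$. In the first case, $\{x\} = \{x\} \cap Y$ is the trace of a closed set on $Y$, hence closed in $Y$. In the second case, write $\{x\} = \bigcap \U$ for some $\U \subset \Open(X)$ with $|\U| < \kappa$, so that
\[
\{x\} = \{x\} \cap Y = \bigcap_{U \in \U} (U \cap Y),
\]
an intersection of fewer than $\kappa$ open subsets of $Y$, i.e., a $G_{<\kappa}$-set in $Y$. Either way, $\{x\}$ lies in $\Closed(Y) \cup \G{<\kappa}(Y)$, so $Y$ is a $T_{\ClosedOrG{<\kappa}}$-space.

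The $T_{\ClosedOrG{\infty}}$ statement follows by the same argument, simply replacing ``fewer than $\kappa$'' by ``some cardinal'' throughout; no cardinal arithmetic is needed because each singleton is handled individually. There is no real obstacle here: the argument is a one-line unpacking of the subspace topology, which is presumably why the authors mark the proposition as obvious and omit the proof entirely.
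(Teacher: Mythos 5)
Your argument is correct and is exactly the one-line unpacking the paper has in mind when it declares this proposition obvious and omits the proof: traces of closed sets and of $G_{<\kappa}$-sets on a subspace are again closed, respectively $G_{<\kappa}$, and each singleton is handled individually. Nothing is missing.
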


Theorem~\ref{t:Borel} and Remark~\ref{t:regular} imply the following
proposition.

\begin{proposition}\label{p1} Let $\kappa$ be an infinite cardinal. Any subspace of a $T_{\kappa\mhyp\Borel}$-space is a $T_{\kappa\mhyp\Borel}$-space.
\end{proposition}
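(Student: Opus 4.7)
The plan is to reduce to the regular case and then use the characterization from Theorem~\ref{t:Borel}. First, by Remark~\ref{t:regular}, for a singular infinite cardinal $\kappa$ we have $\kappa\mhyp\Borel(X) = \kappa^+\mhyp\Borel(X)$ for every space $X$, so the property $T_{\kappa\mhyp\Borel}$ coincides with $T_{\kappa^+\mhyp\Borel}$ and $\kappa^+$ is regular. Hence without loss of generality $\kappa$ is regular.

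Now assume $X$ is a $T_{\kappa\mhyp\Borel}$-space with $\kappa$ regular, and let $Y \subset X$ be a subspace. By Theorem~\ref{t:Borel}, for each $x \in Y \subset X$ there exist a closed set $F \subset X$ and a $G_{<\kappa}$-set $G \subset X$ with $\{x\} = F \cap G$. The key observation is that both classes are preserved by taking traces: $F \cap Y$ is closed in $Y$ by the definition of the subspace topology, and if $G = \bigcap \U$ for a family $\U \subset \Open(X)$ with $|\U| < \kappa$, then $G \cap Y = \bigcap \{U \cap Y : U \in \U\}$ is an intersection of fewer than $\kappa$ open subsets of $Y$, hence a $G_{<\kappa}$-set in $Y$. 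Therefore
$$\{x\} = (F \cap G) \cap Y = (F \cap Y) \cap (G \cap Y)$$
exhibits $\{x\}$ as the intersection of a closed set and a $G_{<\kappa}$-set in $Y$. Applying the other direction of Theorem~\ref{t:Borel} to $Y$, we conclude that $Y$ is a $T_{\kappa\mhyp\Borel}$-space.

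There is no real obstacle here: the work has already been done in Theorem~\ref{t:Borel}, since the classes $\Closed$ and $\G{<\kappa}$ are manifestly hereditary, whereas a direct transfinite induction over the levels of the algebra $\kappa\mhyp\Borel(X)$ would be awkward because complementation behaves badly when one restricts to a subspace. The reduction via the characterization bypasses this issue entirely.
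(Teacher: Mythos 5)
Your proof is correct and follows exactly the route the paper intends: the paper gives no written proof beyond stating that Proposition~\ref{p1} follows from Theorem~\ref{t:Borel} and Remark~\ref{t:regular}, and your argument --- reduce to regular $\kappa$ via the remark, then trace the decomposition $\{x\}=F\cap G$ onto the subspace --- is precisely that derivation, spelled out.
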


\begin{proposition} \label{p:closed_hereditarily_Borel}
	Let $\kappa$ be an infinite cardinal. A topological space $X$ is a $T_{\kappa\mhyp\Borel}$-space if and only if each closed subspace of $X$ is a $T_{\kappa\mhyp\BP}$-space.
	Also, $X$ is $T_{\infty\mhyp\Borel}$ (i.e. $T_0$) if and only if each closed subspace of $X$ is a $T_{\infty\mhyp\BP}$-space.
\end{proposition}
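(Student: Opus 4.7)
The plan is to treat both biconditionals at once, using the characterizations in Theorems~\ref{t:Borel} and~\ref{t:BP}. For the ``only if'' direction, both halves are immediate: Proposition~\ref{p1} tells us that $T_{\kappa\mhyp\Borel}$ passes to arbitrary subspaces, and the diagram implication $T_{\kappa\mhyp\Borel}\Rightarrow T_{\kappa\mhyp\BP}$ finishes the job; the $\infty$-variant is the same argument, using $\infty\mhyp\Borel(X)=|X|^+\mhyp\Borel(X)$ and Corollary~\ref{c:T0=TB}. Also, by Remark~\ref{t:regular}, we may assume $\kappa$ is regular (replace $\kappa$ by $\kappa^+$ if it is singular).

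For the ``if'' direction, fix $x\in X$ and let $F:=\clo{\{x\}}$. By hypothesis, the closed subspace $F$ is $T_{\kappa\mhyp\BP}$, so by Theorem~\ref{t:BP} the singleton $\{x\}$ is either nowhere dense in $F$ or a $G_{<\kappa}$-set in $F$. I would then rule out the first alternative: since $\{x\}$ is by construction dense in $F$, its closure in $F$ is all of $F$, whose interior in $F$ is $F\ni x$, hence nonempty. Therefore $\{x\}$ must be a $G_{<\kappa}$-set in $F$, say $\{x\}=\bigcap_{\alpha<\lambda}V_\alpha$ with $\lambda<\kappa$ and each $V_\alpha$ open in $F$. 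Writing $V_\alpha=U_\alpha\cap F$ for some $U_\alpha\in\tau$ and setting $G:=\bigcap_{\alpha<\lambda}U_\alpha$, we obtain $\{x\}=F\cap G$ with $F$ closed in $X$ and $G$ a $G_{<\kappa}$-set in $X$. Theorem~\ref{t:Borel} then yields $X\in T_{\kappa\mhyp\Borel}$.

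The $\infty$-version proceeds identically: apply Theorem~\ref{t:BP} to $F$ with $\kappa=|F|^+$, conclude that $\{x\}$ is a $G_\infty$-set in $F$ (the nowhere-dense alternative is eliminated by the same observation), push the representation up to $X$, and invoke the $\kappa=|X|^+$ instance of Theorem~\ref{t:Borel} together with Corollary~\ref{c:T0=TB}.

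I do not expect any real obstacle here; the only substantive point is the observation that $\{x\}$ is \emph{dense} in $F=\clo{\{x\}}$ and hence cannot be nowhere dense in $F$ (as $F\neq\emptyset$). Everything else is a routine transfer between $G_{<\kappa}$-sets in the subspace $F$ and intersections $F\cap G$ in the ambient space $X$.
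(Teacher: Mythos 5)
Your proposal is correct and follows essentially the same route as the paper: the ``only if'' direction via Proposition~\ref{p1}, and the ``if'' direction by applying Theorem~\ref{t:BP} to the closed subspace $\clo{\{x\}}$, noting that $\{x\}$ is dense there and hence not nowhere dense, and then transferring the resulting $G_{<\kappa}$-representation to $X$ via Theorem~\ref{t:Borel}. The only difference is that you spell out the subspace-to-ambient-space transfer of the $G_{<\kappa}$-set explicitly, which the paper leaves implicit.
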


\begin{proof} The ``only if'' part follows from Proposition~\ref{p1}. To prove the ``only if'' part, assume that each closed subspace of $X$ is a $T_{\kappa\mhyp\BP}$-space and that $\kappa$ is regular. Given any point $x\in X$ consider the closure $\clo{\{x\}}$ of the singleton $\{x\}$. Taking into account that $\{x\}$ is dense in $\clo{\{x\}}$ and $\clo{\{x\}}$ is a $T_{\kappa\mhyp\BP}$-space, we can apply Theorem~\ref{t:BP} and conclude that the singleton $\{x\}$ is a $G_{<\kappa}$-set in $\clo{\{x\}}$ and consequently, belongs to the algebra $\kappa\mhyp\Borel(X)$.

The $\infty$ case follows since $\infty\mhyp\Borel(X) = |X|^+\mhyp\Borel(X)$ and $\infty\mhyp\BP(Y) = |Y|^+\mhyp\BP(Y) = |X|^+\mhyp\BP(Y)$ for every $Y \subset X$.
\end{proof}

\begin{proposition}
	A topological space $X$ is $T_1$ if and only if each closed subspace of $X$ is $T_\NwdOrRO$.
	
	\begin{proof}
		The necessity is clear. For the sufficiency let $x \in X$. The singleton $\{x\}$ is dense in $\clo{\{x\}}$, so it is not nowhere dense in $\clo{\{x\}}$, and it is regular open in $\clo{\{x\}}$ only if $\clo{\{x\}} = \{x\}$.
	\end{proof}
\end{proposition}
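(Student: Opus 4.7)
\medskip

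The necessity is immediate: $T_1$ is inherited by every subspace, and a $T_1$-space is $T_\ClosedOrNwd$, which is stronger than $T_\NwdOrRO$ by Diagram~\ref{diag_2}.

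For sufficiency, my plan is to look at the minimal closed set around a point, namely $F := \clo{\{x\}}$ for an arbitrary $x \in X$, and apply the hypothesis to this particular closed subspace. The key observation is that $\{x\}$ is always dense in $F$ (its closure in $F$ is $F$ itself, since $F$ is closed in $X$). By hypothesis $F$ is a $T_\NwdOrRO$-space, so the singleton $\{x\}$ is either nowhere dense in $F$ or regular open in $F$.

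A nonempty dense subset of a nonempty space is never nowhere dense, so the first option is ruled out. Hence $\{x\}$ is regular open in $F$, which means $\{x\} = \int_F(\clo_F(\{x\})) = \int_F(F) = F$. Therefore $\clo{\{x\}} = \{x\}$ in $X$, and as $x$ was arbitrary, $X$ is $T_1$.

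There is no real obstacle here; the only thing to be careful about is that ``regular open'' and ``nowhere dense'' must both be interpreted inside the subspace $F$ rather than in $X$, so that the characterization of $T_\NwdOrRO$ applies to $F$ directly. Once this is fixed, the two cases collapse into either a contradiction with density or the desired conclusion that $\{x\}$ is closed.
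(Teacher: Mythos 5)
Your argument is correct and is essentially the paper's own proof: restrict to the closed subspace $\clo{\{x\}}$, note that $\{x\}$ is dense there (hence not nowhere dense), and observe that being regular open in that subspace forces $\{x\} = \int_F(\clo_F(\{x\})) = F$. The only difference is that you spell out the regular-open computation explicitly, which the paper leaves implicit.
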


To summarize, in Diagram~\ref{diag_2} exactly the axioms in the first two \tr{rows}{columns} with the exception of $T_\ClosedOrRO$ and $T_\ClosedMeetsRO$ are hereditary.
Also, the hereditary variant of a non-hereditary axiom is the weakest stronger hereditary axiom in the diagram, and for this, only satisfying the axiom closed-hereditarily is enough.

Let us investigate the hereditary properties of the non-hereditary axioms.
The class of $T_{\kappa\mhyp\BP}$-spaces is not hereditary because a nowhere dense set does not have to be nowhere dense in every subspace containing it.

\begin{proposition} \label{t:nwd}
	Let $X$ be a topological space and $A \subset X$. Every subset of $A$ that is nowhere dense in $X$ is nowhere dense in $A$ if and only if $A$ is $\beta$-open.
	
	\begin{proof}
		The condition that $A$ is $\beta$-open is necessary since $A \setminus \clo{\int(\clo{A})}$ is open in $A$ but nowhere dense in $X$. Now we show that the condition is also sufficient. Suppose that $A$ is $\beta$-open. If $N \subset A$ is not nowhere dense in $A$, then there is a an open set $U$ such that $\emptyset \neq U \cap A \subset \clo{N} \cap A$. Let us consider the set $V = U \cap \int(\clo{A})$. Clearly, $V$ is open and we have $V \subset U \cap \clo{A} \subset \clo{U \cap A} \subset \clo{N}$. Finally, we have $\emptyset \neq U \cap A \subset U \cap \clo{\int(\clo{A})} \subset \clo{U \cap \int(\clo{A})} = \clo{V}$. Therefore, $V \neq \emptyset$ and $N$ is not nowhere dense in $X$.
	\end{proof}
\end{proposition}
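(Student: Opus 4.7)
The plan is to prove the two implications separately, working throughout with the characterization $A \text{ is } \beta\text{-open}$ $\iff$ $A \subset \clo{\int(\clo{A})}$.

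For the necessity direction I would argue by contraposition and exhibit one canonical witness. Assuming $A \not\subset \clo{\int(\clo{A})}$, the set $N := A \setminus \clo{\int(\clo{A})}$ is nonempty, contained in $A$, and relatively open in $A$, since it is the intersection of $A$ with the open complement of $\clo{\int(\clo{A})}$. A nonempty relatively open set is never nowhere dense in $A$, so it suffices to check that $N$ is nowhere dense in $X$. For this, any nonempty open $U \subset \clo{N}$ would have to meet $N$; but $U \subset \clo{N} \subset \clo{A}$ forces $U \subset \int(\clo{A}) \subset \clo{\int(\clo{A})}$, contradicting $N \cap \clo{\int(\clo{A})} = \emptyset$. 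So $N$ is the desired counterexample to the hereditariness of nowhere-denseness.

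For the sufficiency direction I would assume $A \subset \clo{\int(\clo{A})}$ and, given $N \subset A$ that is not nowhere dense in $A$, produce a nonempty open subset of $\clo{N}$ to conclude $N$ is not nowhere dense in $X$. The hypothesis yields an open $U \subset X$ with $\emptyset \neq U \cap A \subset \clo{N} \cap A$, and the natural candidate is $V := U \cap \int(\clo{A})$, which is open in $X$. The inclusion $V \subset \clo{N}$ reduces to the chain $V \subset U \cap \clo{A} \subset \clo{U \cap A} \subset \clo{N}$, while nonemptiness of $V$ uses $\beta$-openness through $\emptyset \neq U \cap A \subset U \cap \clo{\int(\clo{A})} \subset \clo{V}$. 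Both steps rest on the standard identity $U \cap \clo{B} \subset \clo{U \cap B}$ valid whenever $U$ is open.

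The main obstacle I expect is guessing the correct witness $V$ in the sufficiency direction: the naive choice $V = U$ is too large to lie inside $\clo{N}$, whereas intersecting with $\clo{A}$ would destroy openness, and only the compromise $V = U \cap \int(\clo{A})$ simultaneously enables the containment $V \subset \clo{N}$ and the $\beta$-openness argument for $V \neq \emptyset$. Once this candidate is identified, the rest is a mechanical application of the open-set/closure identity above.
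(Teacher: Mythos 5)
Your proposal is correct and follows essentially the same route as the paper: the same canonical witness $A \setminus \clo{\int(\clo{A})}$ for necessity (you merely spell out, via contraposition, the nowhere-density check that the paper leaves implicit) and the identical choice $V = U \cap \int(\clo{A})$ with the same chain of inclusions for sufficiency.
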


For every family of sets $\A$ and a set $B$ we denote the family $\{A \cap B: A \in \A\}$ by $\A\restr{B}$.

\begin{proposition} \label{t:BP-restriction}
	Let $\kappa$ be an infinite cardinal, let $X$ be a topological space and let $U \subset X$.
	\begin{itemize}
		\item If $U$ is $\beta$-open, then $\kappa\mhyp\BP(U) = \kappa\mhyp\BP(X)\restr{U}$.
		\item If $U$ is semi-open, then $U \in \kappa\mhyp\BP(X)$ and $\kappa\mhyp\BP(U) = \kappa\mhyp\BP(X) \cap \mathcal P(U)$.
	\end{itemize}
	
	\begin{proof}
		For the first part, let us consider the map $f: \mathcal P(X) \to \mathcal P(U)$ defined by $f(A) = A \cap U$ for every $A \subset X$. The map $f$ preserves arbitrary unions, intersections, and complements. Hence, $f(\A)$ is a $\kappa$-additive subalgebra of $\mathcal P(U)$ for every $\kappa$-additive subalgebra $\A \subset \mathcal P(X)$, and also $f^{-1}(\A)$ is a $\kappa$-additive subalgebra of $\mathcal P(X)$ for every $\kappa$-additive subalgebra $\A \subset \mathcal P(U)$. Moreover, if $\A$ is the smallest $\kappa$-additive subalgebra of $\mathcal P(X)$ containing some family $\F$, then $f(\A)$ is the smallest $\kappa$-additive subalgebra of $\mathcal P(U)$ containing the family $f(\F)$. In our case, $\kappa\mhyp\BP(X)\restr{U}$ is the smallest $\kappa$-additive subalgebra of $\mathcal P(U)$ containing $\Open(X)\restr{U} \cup \Nwd(X)\restr{U}$. Clearly, $\Open(X)\restr{U} = \Open(U)$ and by Proposition~\ref{t:nwd} we have also $\Nwd(X)\restr{U} = \Nwd(U)$, which concludes the proof.
		
		For the second part, since $U$ is semi-open, there is an open set $V$ such that $V \subset U \subset \clo{V}$, so $U$ is the union of an open set and a nowhere dense set and hence a member of $\kappa\mhyp\BP(U)$. For every $\kappa$-additive subalgebra $\A \subset \mathcal P(X)$ we have $\A \cap \mathcal P(U) \subset \A\restr{U}$. On the other hand, $\A\restr{U} \subset \A \cap \mathcal P(U)$ if and only if $U \in \A$, which is our case. Hence, by also using the first part we have $\kappa\mhyp\BP(U) = \kappa\mhyp\BP(X)\restr{U} = \kappa\mhyp\BP(X) \cap \mathcal P(U)$.
	\end{proof}
\end{proposition}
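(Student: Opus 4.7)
The unifying tool for both statements is the restriction map $f \colon \mathcal{P}(X) \to \mathcal{P}(U)$ defined by $f(A) = A \cap U$. This is a surjective Boolean homomorphism (sending $X$ to $U$) which commutes with arbitrary unions and intersections, and hence sends $\kappa$-additive subalgebras of $\mathcal{P}(X)$ to $\kappa$-additive subalgebras of $\mathcal{P}(U)$ and pulls them back the same way. A short diagram chase (or transfinite induction on the construction of the smallest $\kappa$-additive algebra containing a family, in the spirit of the proof of Theorem~\ref{t:Borel}) then yields the key fact: if $\mathcal{A}$ is the smallest $\kappa$-additive subalgebra of $\mathcal{P}(X)$ containing a family $\mathcal{F}$, then $f(\mathcal{A}) = \mathcal{A}\restr{U}$ is the smallest $\kappa$-additive subalgebra of $\mathcal{P}(U)$ containing $f(\mathcal{F}) = \mathcal{F}\restr{U}$.

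For the $\beta$-open case I would apply this with $\mathcal{F} = \Open(X) \cup \Nwd(X)$, so that $\mathcal{A} = \kappa\mhyp\BP(X)$. The equality $\Open(X)\restr{U} = \Open(U)$ is the definition of the subspace topology, so the remaining task is $\Nwd(X)\restr{U} = \Nwd(U)$. Since $\Nwd(X)$ is closed under subsets, $\Nwd(X)\restr{U}$ is simply the family of subsets of $U$ that are nowhere dense in $X$, so the inclusion $\subset$ is exactly Proposition~\ref{t:nwd} applied to the $\beta$-open set $U$. For the reverse inclusion, every $M \in \Nwd(U)$ is automatically nowhere dense in $X$ (with no hypothesis on $U$): if a nonempty open $V \subset X$ were contained in the $X$-closure of $M$, then $V$ would lie in the closure of $U$ in $X$, forcing $V \cap U$ to be a nonempty open subset of $U$ contained in the $U$-closure of $M$, contradicting $M \in \Nwd(U)$. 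Combining both directions yields $\kappa\mhyp\BP(U) = \kappa\mhyp\BP(X)\restr{U}$.

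For the semi-open case, the hypothesis provides an open $V$ with $V \subset U \subset \clo{V}$, and then $U = V \cup (U \setminus V)$ writes $U$ as the union of an open set and a nowhere dense set (namely $U \setminus V \subset \clo{V} \setminus V$), so $U \in \kappa\mhyp\BP(X)$. Since semi-open implies $\beta$-open, the preceding case already supplies $\kappa\mhyp\BP(U) = \kappa\mhyp\BP(X)\restr{U}$, and it remains to identify $\kappa\mhyp\BP(X)\restr{U}$ with $\kappa\mhyp\BP(X) \cap \mathcal{P}(U)$. Here I would invoke the general observation: for any family $\mathcal{A} \subset \mathcal{P}(X)$ one has $\mathcal{A} \cap \mathcal{P}(U) \subset \mathcal{A}\restr{U}$ trivially (because $A \subset U$ implies $A = A \cap U$), while the reverse inclusion holds as soon as $\mathcal{A}$ is an algebra and $U \in \mathcal{A}$, since then $A \cap U \in \mathcal{A}$ for every $A \in \mathcal{A}$. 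Taking $\mathcal{A} = \kappa\mhyp\BP(X)$ finishes the proof.

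The main obstacle I anticipate is bookkeeping the restriction homomorphism $f$ carefully enough to confirm that it maps the generated $\kappa$-additive algebra on one side to the generated $\kappa$-additive algebra on the other; the point is that unions, intersections, and complements must be computed in the appropriate ambient set ($X$ on one side, $U$ on the other), and one has to shepherd these through a transfinite construction. Everything else reduces to Proposition~\ref{t:nwd} and direct definition-chasing.
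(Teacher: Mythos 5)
Your proposal is correct and follows essentially the same route as the paper: the restriction homomorphism $f(A)=A\cap U$, the fact that it carries the generated $\kappa$-additive algebra to the generated $\kappa$-additive algebra, the identifications $\Open(X)\restr{U}=\Open(U)$ and $\Nwd(X)\restr{U}=\Nwd(U)$ via Proposition~\ref{t:nwd}, and the same argument for the semi-open case. The only difference is that you explicitly verify the reverse inclusion $\Nwd(U)\subset\Nwd(X)$, which the paper leaves implicit; that is a welcome extra detail, not a deviation.
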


The following is obvious by using Proposition~\ref{t:nwd}.

\begin{corollary}
	Any $\beta$-open subspace of a $T_\Nwd$-space is a $T_\Nwd$-space. Any $\beta$-open subspace of a $T_\ClosedOrNwd$-space is a $T_\ClosedOrNwd$-space.
\end{corollary}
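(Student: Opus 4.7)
The plan is to combine the characterizations of the two properties with Proposition~\ref{t:nwd}, which translates nowhere-denseness in $X$ into nowhere-denseness in a $\beta$-open subspace. Both statements reduce to checking the defining condition on each singleton of the subspace, so there is essentially nothing to set up beyond fixing notation.

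For the first assertion, let $X$ be a $T_\Nwd$-space and let $A \subset X$ be $\beta$-open. Pick any $x \in A$; then $\{x\}$ is a subset of $A$ that is nowhere dense in $X$. By Proposition~\ref{t:nwd}, every subset of a $\beta$-open set that is nowhere dense in $X$ remains nowhere dense in that subset, so $\{x\}$ is nowhere dense in $A$. Hence $A$ is $T_\Nwd$.

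For the second assertion, let $X$ be $T_\ClosedOrNwd$ and $A \subset X$ be $\beta$-open. Pick $x \in A$. By hypothesis, $\{x\}$ is either closed in $X$ or nowhere dense in $X$. In the first case, $\{x\} = \{x\} \cap A$ is closed in $A$ as the intersection of $A$ with a closed subset of $X$. In the second case, $\{x\}$ is a subset of the $\beta$-open set $A$ that is nowhere dense in $X$, and Proposition~\ref{t:nwd} gives that $\{x\}$ is nowhere dense in $A$. Either way, $A$ is $T_\ClosedOrNwd$.

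There is no real obstacle here; the only substantive input is Proposition~\ref{t:nwd}, which is exactly what makes $\beta$-open the right notion, and it has already been proved. If anything, the mild subtlety is just to note that being closed is trivially inherited by subspaces, so no additional hypothesis on $A$ is needed for the closed case.
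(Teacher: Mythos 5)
Your proof is correct and is exactly the argument the paper intends: the paper dispatches this corollary with ``The following is obvious by using Proposition~\ref{t:nwd}'', and your singleton-by-singleton check (nowhere dense in $X$ transfers to $A$ by Proposition~\ref{t:nwd}, closedness transfers trivially) is precisely that reasoning spelled out.
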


Either from Proposition~\ref{t:BP-restriction} or from Proposition~\ref{t:nwd} by using Remark~\ref{t:regular} and the characterization in Theorem~\ref{t:BP} we obtain the following.

\begin{corollary} Let $\kappa$ be an infinite cardinal. Any $\beta$-open subspace of a $T_{\kappa\mhyp\BP}$-space is a $T_{\kappa\mhyp\BP}$-space. The same holds for $T_{\infty\mhyp\BP}$-spaces.
\end{corollary}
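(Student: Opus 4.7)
The plan is to reduce the statement directly to the characterization in Theorem~\ref{t:BP}, using Proposition~\ref{t:nwd} to move nowhere-dense singletons between $X$ and the $\beta$-open subspace. First I would invoke Remark~\ref{t:regular} to assume, without loss of generality, that $\kappa$ is a regular infinite cardinal. Let $X$ be a $T_{\kappa\mhyp\BP}$-space, let $U \subset X$ be $\beta$-open, and fix $x \in U$. By Theorem~\ref{t:BP} applied to $X$, the singleton $\{x\}$ is either nowhere dense in $X$ or a $G_{<\kappa}$-set in $X$.

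In the first case, Proposition~\ref{t:nwd} (whose hypothesis is exactly $\beta$-openness of $U$) yields that $\{x\}$, being a subset of $U$ that is nowhere dense in $X$, is nowhere dense in $U$. In the second case, write $\{x\} = \bigcap_{\alpha < \lambda} V_\alpha$ with $V_\alpha \in \Open(X)$ and $\lambda < \kappa$; then $\{x\} = \bigcap_{\alpha < \lambda} (V_\alpha \cap U)$ exhibits $\{x\}$ as a $G_{<\kappa}$-set in $U$. Applying Theorem~\ref{t:BP} once more, this time to $U$, gives that $U$ is a $T_{\kappa\mhyp\BP}$-space. I expect no real obstacle here; the only point to watch is that the transfer of nowhere-denseness from $X$ to $U$ genuinely needs $\beta$-openness, which is why the hypothesis on $U$ cannot be weakened.

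For the $T_{\infty\mhyp\BP}$ statement, I would use the identity $\infty\mhyp\BP(X) = |X|^+\mhyp\BP(X)$ from the introduction. Since $|U| \le |X|$ gives $|U|^+ \le |X|^+$, setting $\kappa := |X|^+$ and applying the first part shows that $U$ is a $T_{|X|^+\mhyp\BP}$-space, hence a $T_{\infty\mhyp\BP}$-space. Alternatively, the whole corollary is a one-line consequence of Proposition~\ref{t:BP-restriction}: for a $\beta$-open set $U$ we have $\kappa\mhyp\BP(U) = \kappa\mhyp\BP(X)\restr{U}$, and for $x \in U$ the equality $\{x\} = \{x\} \cap U$ witnesses $\{x\} \in \kappa\mhyp\BP(X)\restr{U}$ whenever $\{x\} \in \kappa\mhyp\BP(X)$.
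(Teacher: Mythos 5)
Your proposal is correct and follows exactly the two routes the paper itself indicates (``Either from Proposition~\ref{t:BP-restriction} or from Proposition~\ref{t:nwd} by using Remark~\ref{t:regular} and the characterization in Theorem~\ref{t:BP}''); you have simply written out the details that the paper leaves implicit. Both your main argument via Theorem~\ref{t:BP} and Proposition~\ref{t:nwd} and your one-line alternative via Proposition~\ref{t:BP-restriction} are sound, including the reduction of the $T_{\infty\mhyp\BP}$ case to $\kappa=|X|^+$.
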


\begin{example} The space $A = \{0, 1, 2\}$ endowed with the topology $\{\emptyset, \{0\}, A\}$ is a $T_{\w\mhyp\BP}$-space, but its closed nowhere dense subspace $B = \{1, 2\}$ is not even a $T_{\infty\mhyp\BP}$-space. Hence, not every subspace having the Baire property can be allowed in the previous corollary.
\end{example}

In order to establish the hereditary properties of the axioms based on regular open sets we first need to look at preservation of regular open sets. Variants of the following proposition are known.

\begin{proposition}
	Let $X$ be a topological space. If $A$ is a pre-open subset of $X$, then $\RO(A) = \RO(X)\restr{A}$.
	\begin{proof}
		We will show that for every open $U \subset X$ we have $\ro_A(U \cap A) = \ro(U) \cap A$, where $\ro$ is a shortcut for the interior of the closure. If $V \subset X$ is open, then we have $\ro_V(U \cap V) = \int_V(\clo{U \cap V} \cap V) = \int(\clo{U} \cap V) = \ro(U) \cap V$, so the claim holds for $A$ open. Let $D \subset X$ be dense. It generally holds that $\int_D(F \cap D) = \int(F) \cap D$ for every closed $F \subset X$. By considering $F = \clo{U \cap D} = \clo{U}$ we obtain $\ro_D(U \cap D) = \int_D(\clo{U \cap D} \cap D) = \ro(U) \cap D$, so the claim holds also for $A$ dense. As a pre-open set, $A$ is dense in some open set $V \subset X$. We have $\ro_A(U \cap A) = \ro_A((U \cap V) \cap A) = \ro_V(U \cap V) \cap A = \ro(U) \cap V \cap A = \ro(U) \cap A$.
	\end{proof}
\end{proposition}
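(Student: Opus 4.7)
The plan is to reduce everything to proving the identity
$$\ro_A(U \cap A) = \ro(U) \cap A$$
for every open $U \subset X$, where $\ro_A$ denotes the $A$-relative ``interior of closure''. Once this is in hand, both inclusions of $\RO(A) = \RO(X)\restr{A}$ drop out immediately: every $W \in \RO(A)$ has the form $\ro_A(U \cap A)$ for some open $U \subset X$ (since open sets in $A$ are traces of open sets in $X$, and regular open sets in $A$ are $\ro_A$-hulls of open sets in $A$), and the identity rewrites this as $\ro(U) \cap A \in \RO(X)\restr{A}$; conversely every $\ro(U) \cap A$ in $\RO(X)\restr{A}$ equals $\ro_A(U \cap A)$ and therefore lies in $\RO(A)$.

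I would prove the identity in three stages, corresponding to $A$ open, $A$ dense, and $A$ pre-open. When $A$ is open, the interior in $A$ of a subset of $A$ coincides with its interior in $X$, and a short verification shows $\clo{U \cap A} \cap A = \clo{U} \cap A$ (the nontrivial ``$\supset$'' uses that $A$-neighborhoods of points in $A$ are global neighborhoods); combining these yields $\ro_A(U \cap A) = \int(\clo{U} \cap A) = \ro(U) \cap A$.

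When $A = D$ is dense in $X$, the key auxiliary fact is that $\int_D(F \cap D) = \int(F) \cap D$ for every closed $F \subset X$. One inclusion is routine; for the other, if $U \cap D \subset F \cap D$ with $U$ open, then density of $D$ forces $U \subset \clo{U \cap D} \subset F$, whence $U \subset \int(F)$. Applying this to $F = \clo{U \cap D} = \clo{U}$ (density used again) gives $\ro_D(U \cap D) = \ro(U) \cap D$.

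For the general pre-open case, $A$ is dense in $V := \int(\clo{A})$. Applying the dense case inside the ambient space $V$ to the open set $U \cap V \subset V$ gives $\ro_A(U \cap A) = \ro_A((U \cap V) \cap A) = \ro_V(U \cap V) \cap A$, and the open case applied to $V \subset X$ gives $\ro_V(U \cap V) = \ro(U) \cap V$; since $A \subset V$, intersecting with $A$ collapses this to $\ro(U) \cap A$. I expect the main obstacle to be the dense step, where density has to be invoked twice — once to rewrite $\clo{U \cap D}$ as $\clo{U}$, and once to lift the inclusion $U \cap D \subset F$ up to $U \subset F$ — and the entire two-stage reduction rests on this lemma behaving cleanly.
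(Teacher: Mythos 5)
Your proposal is correct and follows essentially the same route as the paper: both reduce the statement to the identity $\ro_A(U\cap A)=\ro(U)\cap A$, prove it first for $A$ open and for $A$ dense (via the lemma $\int_D(F\cap D)=\int(F)\cap D$ for closed $F$), and then chain the two cases using that a pre-open set is dense in the open set $\int(\clo{A})$. The only difference is that you spell out the reduction from the identity to $\RO(A)=\RO(X)\restr{A}$, which the paper leaves implicit.
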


The previous proposition cannot be generalized to $\beta$-open subsets $A$. It is not true even for a regular closed set $A$ since $\int(A)$ would be regular open in the whole space but dense in $A$.

\begin{corollary}
	Any pre-open subspace of a $T_\ClosedOrRO$-space is a $T_\ClosedOrRO$-space. The same holds for $T_\ClosedMeetsRO$ and $T_\NwdOrRO$.
\end{corollary}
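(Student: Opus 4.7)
The plan is to reduce each of the three statements to a pointwise check. I would fix a pre-open subspace $A$ of $X$ and an arbitrary point $x \in A$, and in every case combine the singleton characterization of the corresponding axiom with the preceding proposition $\RO(A) = \RO(X)\restr{A}$.

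I would treat $T_\ClosedOrRO$ and $T_\ClosedMeetsRO$ together. Writing $\{x\} = F \cap U$ for some $F$ closed in $X$ and $U \in \RO(X)$ (allowing $F = X$ or $U = X$ to cover the $\Or$ variant), intersecting with $A$ gives $\{x\} = (F \cap A) \cap (U \cap A)$. The first factor is closed in the subspace $A$, and the second lies in $\RO(X)\restr{A} = \RO(A)$ by the previous proposition, so $\{x\}$ has the desired form in $A$.

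The $T_\NwdOrRO$ case works the same way when $\{x\}$ is regular open in $X$. The remaining subcase, where $\{x\}$ is nowhere dense in $X$, requires the extra observation that nowhere density is inherited by $A$; this is exactly the content of Proposition~\ref{t:nwd}, whose hypothesis is met once one notes that every pre-open set is $\beta$-open (since $\int(\clo{A}) \subset \clo{\int(\clo{A})}$). This last point is the only step that is not entirely automatic, but it is minor, and no serious obstacle is expected.
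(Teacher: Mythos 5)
Your proof is correct and follows exactly the route the paper intends: the corollary is stated without proof as an immediate consequence of the preceding proposition $\RO(A)=\RO(X)\restr{A}$, together with Proposition~\ref{t:nwd} and the fact that pre-open implies $\beta$-open for the nowhere dense case. Your handling of the $\Or$ versus $\Meets$ variants via $F=X$ or $U=X$ is a clean way to unify the first two cases.
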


The following example shows that the previous corollary cannot be generalized to $\beta$-open subspaces.

\begin{example}
	The Khalimsky line is a $T_\ClosedOrRO$-space (Example~\ref{e:Khalimsky}), but its regular closed (and hence semi-open and $\beta$-open) subspace $\{0, 1, 2\}$ is not even $T_\NwdOrRO$.
\end{example}

Clearly, the subset of all non-isolated points of a $T_\ClosedOrOpen$-space is $T_1$, so in particular the property $T_\ClosedOrRO$ is hereditary with respect to meager subsets (but not to all subsets with the Baire property as the previous example shows). On the other hand, the following example shows that the properties $T_\ClosedMeetsRO$ and $T_\NwdOrRO$ are not hereditary even to closed nowhere dense subsets.

\begin{example}
	The space $\{-1, 0, 1_{-1}, 1_0, 1_1\}$ from Example~\ref{e:Khalimsky} is $T_\ClosedMeetsRO$, but its closed nowhere dense subspace $\{0, 1_0\}$ (homeomorphic to the Sierpi\'nski space) is not even $T_\NwdOrRO$.
\end{example}

Next, let us consider preservation of the separation axioms under products.

\begin{example}
	The Khalimsky line $K$ (Example~\ref{e:Khalimsky}) is $T_\ClosedOrRO$, but the square $K \times K$ is not even $T_\ClosedOrG{\infty}$. Therefore, the properties $T_\ClosedOrRO$ and $T_\ClosedOrG{<\kappa}$ are easily destroyed by products.
	
\end{example}

For an infinite cardinal $\kappa$ let $\kappa^*$ denote $\kappa$ if $\kappa$ is regular, and $\kappa^+$ if it is singular.
The following proposition can be easily derived from Theorem~\ref{t:Borel} and for singular $\kappa$ from Remark~\ref{t:regular}.

\begin{proposition} For any infinite cardinal $\kappa$, any set $I$ of cardinality $|I|<\kappa^*$ and any family $\{X_i\}_{i\in I}$ of $T_{\kappa\mhyp\Borel}$-spaces the Tychonoff product $\prod_{i\in I}X_i$ is a $T_{\kappa\mhyp\Borel}$-space.
\end{proposition}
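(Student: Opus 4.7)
The plan is to combine the characterization of $T_{\kappa\mhyp\Borel}$ from Theorem~\ref{t:Borel} with a coordinatewise decomposition of singletons in the product, reducing at the outset to the case of regular $\kappa$ via Remark~\ref{t:regular}. Concretely, if $\kappa$ is singular I would apply the regular case to $\kappa^+$: the hypothesis $|I|<\kappa^*=\kappa^+$ is exactly what the regular case needs for $\kappa^+$, and by Remark~\ref{t:regular} the classes $T_{\kappa\mhyp\Borel}$ and $T_{\kappa^+\mhyp\Borel}$ coincide for each factor and for the product. So the real work is for $\kappa$ regular and $|I|<\kappa$.

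Assume $\kappa$ is regular and fix a point $x=(x_i)_{i\in I}\in\prod_{i\in I}X_i$. Since each $X_i$ is $T_{\kappa\mhyp\Borel}$, Theorem~\ref{t:Borel} gives a closed set $F_i\subset X_i$ and a family $\{U_{i,j}:j<\lambda_i\}\subset\Open(X_i)$ with $\lambda_i<\kappa$ such that $\{x_i\}=F_i\cap\bigcap_{j<\lambda_i}U_{i,j}$. Then
\[
\{x\}=\bigcap_{i\in I}\pi_i^{-1}(\{x_i\})=\Bigl(\bigcap_{i\in I}\pi_i^{-1}(F_i)\Bigr)\cap\Bigl(\bigcap_{i\in I}\bigcap_{j<\lambda_i}\pi_i^{-1}(U_{i,j})\Bigr).
\]
The first factor is closed as an arbitrary intersection of preimages of closed sets under continuous projections. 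The second factor is an intersection of basic open subsets of the product.

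The step I expect to be the only mildly delicate one is bounding the cardinality of the open family defining the second factor. The total number of sets involved is $\sum_{i\in I}\lambda_i$, and by regularity of $\kappa$ a union of $|I|<\kappa$ many sets each of cardinality $<\kappa$ again has cardinality $<\kappa$; hence the second factor is a $G_{<\kappa}$-set. Applying Theorem~\ref{t:Borel} in the reverse direction, $\{x\}$ belongs to $\kappa\mhyp\Borel\bigl(\prod_{i\in I}X_i\bigr)$, completing the proof.
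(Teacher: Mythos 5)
Your proof is correct and follows exactly the route the paper intends: the paper states that the proposition ``can be easily derived from Theorem~\ref{t:Borel} and for singular $\kappa$ from Remark~\ref{t:regular}'' and gives no further details, and your argument -- reducing to regular $\kappa$, decomposing $\{x\}=\bigcap_i\pi_i^{-1}(F_i)\cap\bigcap_i\bigcap_j\pi_i^{-1}(U_{i,j})$, and using regularity of $\kappa$ to bound the number of open sets -- is precisely the omitted verification.
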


\begin{remark} \label{t:nwd_in_product}
	Let $\{(x_i)_{i \in I}\}$ be a singleton in a Tychonoff product
$\prod_{i\in I} X_i$. Its closure $\prod_{i\in I} \clo{\{x_i\}}$ contains an
open subset if and only if there exists a finite set $F\subset I$ such that for
every $i\in F$ the singleton $\{x_i\}$ is not nowhere dense in $X_i$, and for
every $i\in I\setminus F$ the singleton $\{x_i\}$ is dense in $X_i$.
 Hence, the singleton $\{(x_i)_{i \in I}\}$ is nowhere dense
if and only if $\{x_i\}$ is nowhere dense in $X_i$ for some $i\in I$ or there
are infinitely many indices $i\in I$ such that $\{x_i\}$ is not dense in $X_i$.
\end{remark}

The following proposition can be easily derived from Theorem~\ref{t:BP}, Remark~\ref{t:nwd_in_product} and for singular $\kappa$ from Remark~\ref{t:regular}.

\begin{proposition} For any infinite cardinal $\kappa$, any set $I$ of cardinality $|I|<\kappa^*$ and any family $\{X_i\}_{i\in I}$ of $T_{\kappa\mhyp\BP}$-spaces the Tychonoff product $\prod_{i\in I}X_i$ is a $T_{\kappa\mhyp\BP}$-space.
\end{proposition}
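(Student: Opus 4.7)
The plan is to combine the characterization in Theorem~\ref{t:BP} with the product-singleton analysis in Remark~\ref{t:nwd_in_product}. First, using Remark~\ref{t:regular}, if $\kappa$ is singular then $\kappa\mhyp\BP(Y) = \kappa^+\mhyp\BP(Y)$ for every space $Y$, so the property $T_{\kappa\mhyp\BP}$ coincides with $T_{\kappa^*\mhyp\BP}$. Hence we may assume without loss of generality that $\kappa$ is regular, in which case $\kappa^* = \kappa$ and the hypothesis becomes $|I| < \kappa$.

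Next, fix an arbitrary point $x = (x_i)_{i \in I}$ in $X := \prod_{i \in I} X_i$. By Theorem~\ref{t:BP} applied in each factor, for every $i \in I$ the singleton $\{x_i\}$ is either nowhere dense in $X_i$ or a $G_{<\kappa}$-set in $X_i$. To show that $X$ is $T_{\kappa\mhyp\BP}$, by the same theorem it suffices to show that $\{x\}$ is nowhere dense in $X$ or a $G_{<\kappa}$-set in $X$, and I would split into two cases driven by Remark~\ref{t:nwd_in_product}.

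In the first case, suppose that either some $\{x_i\}$ is nowhere dense in $X_i$, or there are infinitely many $i \in I$ such that $\{x_i\}$ is not dense in $X_i$. Then Remark~\ref{t:nwd_in_product} immediately gives that $\{x\}$ is nowhere dense in $X$, and we are done. In the complementary case, no $\{x_i\}$ is nowhere dense, so by the factor-wise characterization each $\{x_i\}$ is a $G_{<\kappa}$-set: write $\{x_i\} = \bigcap_{\alpha < \lambda_i} U_{i,\alpha}$ with $\lambda_i < \kappa$ and $U_{i,\alpha}$ open in $X_i$. Pulling back along the projections $\pi_i \colon X \to X_i$ yields
$$\{x\} = \bigcap_{i \in I} \pi_i^{-1}(\{x_i\}) = \bigcap_{i \in I} \bigcap_{\alpha < \lambda_i} \pi_i^{-1}(U_{i,\alpha}),$$
which is an intersection of $\sum_{i \in I} \lambda_i$ open subsets of $X$.

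The step that requires the hypothesis on $|I|$ is the final cardinal bookkeeping: since $\kappa$ is regular, $|I| < \kappa$, and each $\lambda_i < \kappa$, a standard regularity argument yields $\sum_{i \in I} \lambda_i < \kappa$, so $\{x\}$ is indeed a $G_{<\kappa}$-set in $X$. This is the only point where the regularity of $\kappa$ and the restriction $|I| < \kappa^*$ are genuinely used, and it is the main (mild) obstacle — everything else is a direct application of previously established results.
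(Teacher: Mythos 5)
Your proof is correct and follows exactly the route the paper indicates (the paper only states that the proposition ``can be easily derived from Theorem~\ref{t:BP}, Remark~\ref{t:nwd_in_product} and for singular $\kappa$ from Remark~\ref{t:regular}'', without writing out the details): reduce to regular $\kappa$, then for each point either some coordinate singleton is nowhere dense (so the product singleton is nowhere dense by the remark) or all coordinate singletons are $G_{<\kappa}$, and the regularity bookkeeping $\sum_{i\in I}\lambda_i<\kappa$ closes the argument. No gaps.
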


\begin{corollary} For any family $\{X_i\}_{i\in I}$ of $T_{\infty\mhyp\BP}$-spaces the Tychonoff product $\prod_{i\in I}X_i$ is a $T_{\infty\mhyp\BP}$-space.
\end{corollary}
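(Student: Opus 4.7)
The plan is to reduce the statement to the preceding proposition by choosing one single sufficiently large regular cardinal $\kappa$ that simultaneously witnesses the separation axiom in every factor and dominates the size of the index set. The key point is that by the definitions in the introduction, $\infty\mhyp\BP(X_i)=|X_i|^+\mhyp\BP(X_i)$, so for any $\kappa\geq |X_i|^+$ the inclusion $\infty\mhyp\BP(X_i)\subset\kappa\mhyp\BP(X_i)$ is automatic by the monotonicity of $\kappa\mhyp\BP(\cdot)$ in $\kappa$ (the smallest $\kappa$-additive algebra containing a fixed generating family only grows when we weaken the additivity requirement).

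Concretely, if $I=\emptyset$ the product is a one-point space and the claim is trivial, so I may assume $I\neq\emptyset$. I set $\lambda:=\aleph_0+|I|+\sup_{i\in I}|X_i|$ and take $\kappa:=\lambda^+$. Being a successor cardinal, $\kappa$ is infinite and regular, so $\kappa^*=\kappa$ and by construction $|I|\leq\lambda<\kappa=\kappa^*$. For each $i\in I$ one has $|X_i|\leq\lambda<\kappa$, hence $|X_i|^+\leq\kappa$, which by the observation above gives $\infty\mhyp\BP(X_i)=|X_i|^+\mhyp\BP(X_i)\subset\kappa\mhyp\BP(X_i)$. Since each $X_i$ is assumed $T_{\infty\mhyp\BP}$, every singleton $\{x_i\}$ lies in $\infty\mhyp\BP(X_i)$ and therefore also in $\kappa\mhyp\BP(X_i)$, so each $X_i$ is $T_{\kappa\mhyp\BP}$ for this uniform $\kappa$.

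All the hypotheses of the preceding proposition are now fulfilled, so the Tychonoff product $\prod_{i\in I}X_i$ is a $T_{\kappa\mhyp\BP}$-space, and consequently a $T_{\infty\mhyp\BP}$-space. There is essentially no obstacle in this argument; the only subtlety worth flagging is ensuring that $\kappa$ is both infinite and regular, which is why I take the successor of $\aleph_0+|I|+\sup_i|X_i|$ rather than that cardinal itself.
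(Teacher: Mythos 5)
Your proof is correct and is essentially the argument the paper intends: the corollary follows from the preceding proposition by choosing one uniform regular cardinal $\kappa$ exceeding $|I|$ and every $|X_i|^+$, using $\infty\mhyp\BP(X_i)=|X_i|^+\mhyp\BP(X_i)\subset\kappa\mhyp\BP(X_i)$ and then $\kappa\mhyp\BP\bigl(\prod_{i\in I}X_i\bigr)\subset\infty\mhyp\BP\bigl(\prod_{i\in I}X_i\bigr)$. One cosmetic remark: enlarging $\kappa$ \emph{strengthens} the additivity requirement (closure under more unions), which is exactly why the smallest algebra grows, so your parenthetical ``weaken'' has the terminology backwards even though the monotonicity you use is right.
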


From Remark~\ref{t:nwd_in_product} we also obtain the following.

\begin{proposition}
	For any $T_\Nwd$-space $X$ and any topological space $Y$ the product $X \times Y$ is a $T_\Nwd$-space.
\end{proposition}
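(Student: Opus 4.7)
The plan is to apply Remark~\ref{t:nwd_in_product} directly with the two-factor index set $I=\{1,2\}$. For a binary product $X\times Y$, the clause in that remark about ``infinitely many indices $i\in I$ such that $\{x_i\}$ is not dense in $X_i$'' is vacuous, since $I$ is finite. Hence the remark reduces to the statement that $\{(x,y)\}$ is nowhere dense in $X\times Y$ if and only if $\{x\}$ is nowhere dense in $X$ or $\{y\}$ is nowhere dense in $Y$.

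Given this reduction, I would fix an arbitrary point $(x,y)\in X\times Y$ and invoke the hypothesis that $X$ is a $T_\Nwd$-space, so that $\{x\}$ is already nowhere dense in $X$. The disjunction above is then satisfied by its first alternative, regardless of any properties of $Y$, and so $\{(x,y)\}$ is nowhere dense in $X\times Y$. Since $(x,y)$ was arbitrary, $X\times Y$ is a $T_\Nwd$-space.

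There is no real obstacle here: all of the content is packed into Remark~\ref{t:nwd_in_product}, and the only thing worth emphasizing in the write-up is why the ``infinitely many'' condition does not appear in the binary case. If one wanted to avoid citing the remark, one could give a short direct argument: fix a basic open box $U\times V\subset X\times Y$ with $(x,y)\in U\times V$; since $\{x\}$ is nowhere dense in $X$, the open set $U$ is not contained in $\overline{\{x\}}$, so we may pick $x'\in U\setminus\overline{\{x\}}$, and then a basic open neighborhood of $(x',y)$ inside $U\times V$ missing $\overline{\{(x,y)\}}=\overline{\{x\}}\times\overline{\{y\}}$ witnesses that $\overline{\{(x,y)\}}$ has empty interior. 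Either presentation is a one-paragraph proof.
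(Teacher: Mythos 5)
Your main argument is correct and is exactly the paper's approach: the paper derives this proposition directly from Remark~\ref{t:nwd_in_product}, whose ``infinitely many indices'' clause is indeed vacuous for a two-element index set. One small caveat on your optional direct argument: to verify that $\clo{\{(x,y)\}}=\clo{\{x\}}\times\clo{\{y\}}$ has empty interior you must consider an \emph{arbitrary} nonempty basic box $U\times V$, not only those containing $(x,y)$ (an open set inside the closure of a singleton need not contain that point), though your computation --- picking $x'\in U\setminus\clo{\{x\}}$ --- goes through verbatim in that generality.
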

	
\begin{proposition}
	For any family $\{X_i\}_{i\in I}$ of\/ $T_\ClosedOrNwd$-spaces the product
$\prod_{i\in I}X_i$ is a $T_\ClosedOrNwd$-space. Moreover, if infinitely many
of the spaces $X_i$ are nondegenerate, the product is even a $T_\Nwd$-space.
\end{proposition}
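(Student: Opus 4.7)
The plan is to split on the two cases offered by the definition of $T_\ClosedOrNwd$ applied coordinatewise. Fix a point $x = (x_i)_{i\in I}$ in $\prod_{i\in I}X_i$. Each factor being $T_\ClosedOrNwd$ means that for every $i$ the singleton $\{x_i\}$ is either closed in $X_i$ or nowhere dense in $X_i$.

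First I would handle the easy alternative: if \emph{some} index $j\in I$ has $\{x_j\}$ nowhere dense in $X_j$, then Remark~\ref{t:nwd_in_product} immediately yields that $\{x\}$ is nowhere dense in the product, and we are done. Otherwise $\{x_i\}$ is closed in $X_i$ for every $i$, and the Tychonoff product $\prod_{i\in I}\{x_i\}=\{x\}$ is closed in $\prod_{i\in I}X_i$ as a product of closed sets (equivalently, its complement is the union of the open sets $\pi_i^{-1}(X_i\setminus\{x_i\})$). In either case $\{x\}$ is closed or nowhere dense, so the product is $T_\ClosedOrNwd$.

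For the moreover clause, I would observe that in a nondegenerate $T_\ClosedOrNwd$-space $X_i$ no singleton $\{x_i\}$ is dense: if $\{x_i\}$ were closed and dense then $\{x_i\}=X_i$ would force $X_i$ to be a singleton, contradicting nondegeneracy, and if $\{x_i\}$ were nowhere dense it is clearly not dense (the space being nonempty). Hence, assuming infinitely many factors $X_i$ are nondegenerate, for every point $x=(x_i)_{i\in I}$ the set of indices $i$ with $\{x_i\}$ not dense in $X_i$ is infinite, so Remark~\ref{t:nwd_in_product} gives that $\{x\}$ is nowhere dense in the product. Thus the product is $T_\Nwd$.

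No real obstacle arises; the only subtle point is recording that nondegeneracy together with $T_\ClosedOrNwd$ rules out a dense singleton, which is what lets the second clause of Remark~\ref{t:nwd_in_product} fire for \emph{every} point simultaneously.
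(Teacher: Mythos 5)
Your proposal is correct and follows exactly the route the paper intends: the paper states this proposition as an immediate consequence of Remark~\ref{t:nwd_in_product}, and your case split (some coordinate singleton nowhere dense versus all coordinate singletons closed, plus the observation that a singleton in a nondegenerate $T_\ClosedOrNwd$-space cannot be dense) is precisely the argument that fills in that derivation.
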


Since a finite product of regular open sets is a regular open set, we have the following.

\begin{proposition}
	Any finite product of $T_\ClosedMeetsRO$-spaces is a $T_\ClosedMeetsRO$-space.
	Any finite product of $T_\NwdOrRO$-spaces is a $T_\NwdOrRO$-space.
\end{proposition}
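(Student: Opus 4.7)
The plan is to exploit two facts already available: that a finite product of regular open sets is regular open (the hint immediately preceding the statement), and the characterization of when a product singleton is nowhere dense given in Remark~\ref{t:nwd_in_product}. Both claims then reduce to straightforward case analyses on the coordinates, with no real hidden difficulty.

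For the $T_\ClosedMeetsRO$ statement, I would take a finite index set $I$, spaces $\{X_i\}_{i\in I}$ satisfying $T_\ClosedMeetsRO$, and a point $x=(x_i)_{i\in I}\in\prod_{i\in I}X_i$. For each $i$, apply the hypothesis to write $\{x_i\}=F_i\cap U_i$ with $F_i\subset X_i$ closed and $U_i\subset X_i$ regular open. Then observe
\[
\{x\}=\prod_{i\in I}\{x_i\}=\Bigl(\prod_{i\in I}F_i\Bigr)\cap\Bigl(\prod_{i\in I}U_i\Bigr),
\]
where $\prod_i F_i$ is closed in the product topology and $\prod_i U_i$ is regular open by the cited hint, since $I$ is finite. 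Thus $\{x\}\in\Closed\Meets\RO(\prod_i X_i)$ and the product satisfies $T_\ClosedMeetsRO$.

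For $T_\NwdOrRO$, I would again pick a point $x=(x_i)_{i\in I}$ and split into two cases according to the hypothesis applied coordinatewise. If \emph{every} $\{x_i\}$ is regular open in $X_i$, then $\{x\}=\prod_i\{x_i\}$ is a finite product of regular open sets and hence regular open in the product. Otherwise, there is some index $i_0$ with $\{x_{i_0}\}$ nowhere dense in $X_{i_0}$; by Remark~\ref{t:nwd_in_product}, for a finite product the condition ``some coordinate singleton is nowhere dense'' already forces $\{x\}$ to be nowhere dense in $\prod_i X_i$ (the alternative clause involving infinitely many non-dense coordinates is vacuous here). In either case $\{x\}\in\Nwd\Or\RO(\prod_i X_i)$, so the product is $T_\NwdOrRO$.

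The only step that needs a moment of care is the application of Remark~\ref{t:nwd_in_product}: one must note that for finite $I$ the ``infinitely many'' clause drops out, so a single nowhere dense coordinate suffices. Apart from that bookkeeping, there is no genuine obstacle, and this is why the authors present both statements together as an immediate consequence of the regular-open-product fact.
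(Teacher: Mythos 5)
Your proof is correct and follows exactly the route the paper intends: the paper gives no separate proof beyond the remark that a finite product of regular open sets is regular open, and your coordinatewise decomposition for $T_\ClosedMeetsRO$ together with the case analysis via Remark~\ref{t:nwd_in_product} for $T_\NwdOrRO$ is the straightforward fleshing-out of that hint. Nothing is missing.
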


\begin{example}
	Let $\kappa$ be an infinite cardinal. The Tychonoff product of $\kappa^*$-many copies of the $T_\ClosedOrRO$-space $\{-1, 0, 1\}$ from Example~\ref{e:Khalimsky} is not even a $T_{\kappa\mhyp\BP}$-space. Therefore, the bounds on the number of factors in the previous propositions are sharp.
\end{example}

A function $f:X\to Y$ between topological spaces is called {\em $\kappa$-Borel} if for any open (or equivalently $\kappa$-Borel) set $U\subset X$ the preimage $f^{-1}(U)$ belongs to the algebra $\kappa\mhyp\Borel(X)$.

\begin{proposition} Let $\kappa$ be an infinite cardinal. A topological space $X$
is $T_{\kappa\mhyp\Borel}$ if it admits a $\kappa$-Borel function $f:X\to Y$
into a $T_{\kappa\mhyp\Borel}$-space $Y$ such that for every $y\in Y$ the
preimage $f^{-1}(y)$ is a $T_{\kappa\mhyp\Borel}$-space.
\end{proposition}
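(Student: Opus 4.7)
The plan is to reduce the problem to applying the characterization from Theorem~\ref{t:Borel} on the fiber $f^{-1}(y)$ and then lifting back to $X$. By Remark~\ref{t:regular}, we may assume without loss of generality that $\kappa$ is regular (for singular $\kappa$, both $\kappa\mhyp\Borel$ and the notion of $\kappa$-Borel function coincide with their $\kappa^+$-versions).

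Fix an arbitrary $x \in X$ and set $y := f(x)$. First, I would upgrade the $\kappa$-Borel hypothesis on $f$ from open sets to $\kappa\mhyp\Borel$ sets by a standard argument: the family $\{B \subset Y : f^{-1}(B) \in \kappa\mhyp\Borel(X)\}$ is a $\kappa$-additive algebra of subsets of $Y$ (since preimage preserves unions, intersections, and complements) containing $\Open(Y)$, and therefore contains $\kappa\mhyp\Borel(Y)$. Since $Y$ is a $T_{\kappa\mhyp\Borel}$-space, $\{y\} \in \kappa\mhyp\Borel(Y)$, and so the fiber $F := f^{-1}(y)$ lies in $\kappa\mhyp\Borel(X)$.

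Next, apply Theorem~\ref{t:Borel} to the $T_{\kappa\mhyp\Borel}$-subspace $F$: there exist a set $C$ closed in $F$ and a $G_{<\kappa}$-set $G$ in $F$ such that $\{x\} = C \cap G$. Then lift these witnesses to $X$. By the definition of the subspace topology, $C = F \cap C'$ for some closed set $C' \subset X$, and $G = \bigcap_{i \in I} (U_i \cap F) = F \cap \bigcap_{i \in I} U_i$ where $|I| < \kappa$ and each $U_i$ is open in $X$; set $G' := \bigcap_{i \in I} U_i$, which is a $G_{<\kappa}$-set in $X$. Consequently,
\[
\{x\} = F \cap C' \cap G'.
\]
The set $F$ lies in $\kappa\mhyp\Borel(X)$ by the previous paragraph, the closed set $C'$ lies in $\kappa\mhyp\Borel(X)$ trivially, and $G'$ lies in $\kappa\mhyp\Borel(X)$ because it is the intersection of fewer than $\kappa$ open sets in a $\kappa$-additive algebra. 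Therefore $\{x\} \in \kappa\mhyp\Borel(X)$.

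There is no real obstacle here; the only subtlety worth being careful about is the lifting step, where one has to notice that a $G_{<\kappa}$-set in the subspace $F$ can be written as the intersection of $F$ with a genuine $G_{<\kappa}$-set in $X$, which uses regularity of $\kappa$ implicitly via the bound on the index set. This is precisely where the reduction to regular $\kappa$ at the start is used.
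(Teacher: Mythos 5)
Your proof is correct and follows essentially the same route as the paper: show that the fiber $f^{-1}(y)$ lies in $\kappa\mhyp\Borel(X)$, then apply Theorem~\ref{t:Borel} to the fiber as a subspace and lift the closed and $G_{<\kappa}$ witnesses to $X$. The only cosmetic difference is that you re-derive the fact that preimages of $\kappa$-Borel sets are $\kappa$-Borel (which the paper's definition of a $\kappa$-Borel function already asserts parenthetically), whereas the paper instead decomposes $\{y\}=F\cap G$ in $Y$ via Theorem~\ref{t:Borel} and pulls back those two sets directly.
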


\begin{proof} By Remark~\ref{t:regular} we can assume that $\kappa$ is regular. Given any point $x\in X$, consider the point $y = f(x)\in Y$. By Theorem~\ref{t:Borel}, the singleton $\{y\}$ can be written as the intersection $\{y\}=F\cap G$ of a closed set $F\subset Y$ and $G_{<\kappa}$-set $G\subset Y$. Since $f$ is $\kappa$-Borel, the preimage $f^{-1}(y)=f^{-1}(F)\cap f^{-1}(G)$ belongs to the algebra $\kappa\mhyp\Borel(X)$. Applying Theorem~\ref{t:Borel} to the $T_{\kappa\mhyp\Borel}$-space $f^{-1}(y)$, we can find a closed set $F'$ in $X$ and a $G_{<\kappa}$-set $G'$ in $X$ such that $\{x\}=f^{-1}(y)\cap F'\cap G'$. Observe that the sets $f^{-1}(y)$, $F'$, $G'$ belong to the algebra $\kappa\mhyp\Borel(X)$, which implies that $\{x\}=f^{-1}(y)\cap F'\cap G'\in\kappa\mhyp\Borel(X)$ and $X$ is a $T_{\kappa\mhyp\Borel}$-space.
\end{proof}

\begin{proposition} Let $\kappa$ be an infinite cardinal. A topological space $X$ is a $T_{\kappa\mhyp\BP}$-space if it admits a function $f:X\to Y$ into a $T_{\kappa\mhyp\BP}$-space $Y$ such that
\begin{itemize}
\item for every $y\in Y$ the preimage $f^{-1}(y)$ is a $T_{\kappa\mhyp\BP}$-space;
\item for every open set $U\subset Y$ the pre-image $f^{-1}(U)$ belongs to the algebra $\kappa\mhyp\BP(X)$;
\item for any nowhere dense set $N\subset Y$ each singleton $\{x\}\subset f^{-1}(N)$ is nowhere dense in $X$.
\end{itemize}
\end{proposition}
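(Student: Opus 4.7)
The plan is to mimic the previous proposition for $T_{\kappa\mhyp\Borel}$-spaces, but to invoke the dichotomy ``nowhere dense or $G_{<\kappa}$'' from Theorem~\ref{t:BP} instead of the intersection characterization of Theorem~\ref{t:Borel}. By Remark~\ref{t:regular} we may assume $\kappa$ is regular. Fix $x\in X$, set $y:=f(x)$, and apply Theorem~\ref{t:BP} in $Y$: either $\{y\}$ is nowhere dense in $Y$, or $\{y\}=\bigcap_{\alpha<\lambda}U_\alpha$ is a $G_{<\kappa}$-set in $Y$.

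In the first case the third hypothesis (with $N=\{y\}$) yields that $\{x\}$ is nowhere dense in $X$, hence $\{x\}\in\kappa\mhyp\BP(X)$. In the second case, the second hypothesis gives $f^{-1}(U_\alpha)\in\kappa\mhyp\BP(X)$ for every $\alpha<\lambda<\kappa$, and $\kappa$-additivity yields $f^{-1}(y)=\bigcap_\alpha f^{-1}(U_\alpha)\in\kappa\mhyp\BP(X)$. Since $f^{-1}(y)$ is itself $T_{\kappa\mhyp\BP}$ by the first hypothesis, Theorem~\ref{t:BP} applied inside the subspace $f^{-1}(y)$ splits this case in two. If $\{x\}$ is a $G_{<\kappa}$-set in $f^{-1}(y)$, the defining open sets of the subspace lift to open sets $W_\alpha\subset X$, so $\{x\}=f^{-1}(y)\cap\bigcap_\alpha W_\alpha$ is the intersection of fewer than $\kappa$ members of the algebra $\kappa\mhyp\BP(X)$.

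The main obstacle is the remaining subcase, where $\{x\}$ is nowhere dense in $f^{-1}(y)$, since a priori nowhere-denseness in a subspace need not descend to the ambient space. I will handle this via the elementary observation that any point $x$ whose singleton is not nowhere dense in $X$ necessarily lies in $\int(\clo{\{x\}})$: indeed, every nonempty open $U\subset\clo{\{x\}}$ is a neighborhood of each of its points $z\in\clo{\{x\}}$, and therefore $U\cap\{x\}\neq\emptyset$, forcing $x\in U$. Thus, if $\{x\}$ were not nowhere dense in $X$, intersecting such an open neighborhood of $x$ with $f^{-1}(y)$ would produce a nonempty open subset of $f^{-1}(y)$ lying in $f^{-1}(y)\cap\clo{\{x\}}$, i.e.\ inside the closure of $\{x\}$ taken in $f^{-1}(y)$, contradicting nowhere-denseness there. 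Hence $\{x\}$ is nowhere dense in $X$ and thus in $\kappa\mhyp\BP(X)$, completing every case.
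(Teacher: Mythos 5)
Your proof is correct and follows essentially the same route as the paper's: reduce to regular $\kappa$, split on whether $\{f(x)\}$ is nowhere dense or $G_{<\kappa}$ in $Y$, pull the $G_{<\kappa}$-set back to put $f^{-1}(f(x))$ into $\kappa\mhyp\BP(X)$, and then apply Theorem~\ref{t:BP} inside the fibre. The only difference is that you explicitly prove the step the paper asserts without comment --- that a singleton nowhere dense in a subspace containing it is nowhere dense in the whole space --- and your argument for it (a non-nowhere-dense singleton lies in the interior of its own closure) is correct.
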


\begin{proof} By Remark~\ref{t:regular} we can assume that the cardinal $\kappa$ is regular. Given any point $x\in X$, consider the point $y = f(x)\in Y$. By Theorem~\ref{t:BP}, the singleton $\{y\}$ is either nowhere dense of a $G_{<\kappa}$-set in $Y$. If $\{y\}$ is nowhere dense in $Y$, then by the third condition, the singleton $\{x\}\subset f^{-1}(y)$ is nowhere dense in $X$ and hence belongs to the algebra $\kappa\mhyp\BP(X)$.

So, assume that $\{y\}$ is a $G_{<\kappa}$-set in $X$. The second condition guarantees that the preimage $f^{-1}(y)$ belongs to the algebra $\kappa\mhyp\BP(X)$.
By the first condition, the singleton $\{x\}$ belongs to the algebra $\kappa\mhyp\BP(f^{-1}(y))$. So by Theorem~\ref{t:BP}, either $\{x\}$ is nowhere dense in $f^{-1}(y)$ and hence in $X$, or there is a $G_{<\kappa}$-set $G\subset X$ such that $\{x\} = G\cap f^{-1}(y)$. In both cases we may conclude that $\{x\}\in\kappa\mhyp\BP(X)$.
\end{proof}

\section{More connections with other separation axioms}
\label{s:connections}

We have observed that several axioms based on Borel and Baire algebras are equivalent to some classical separation axioms. Clearly, $T_\Closed$ is $T_1$, $T_\ClosedOrOpen$ is $T_\frac{1}{2}$ (or $T_{ES}$), $T_\Constructible$ is equivalent to $T_D$, and $T_{\infty\mhyp\Borel}$ is equivalent to $T_0$. Also note that $T_\ClosedOrG{\delta}$-spaces and $T_\Borel$-spaces are called $GT_{\frac12}$-spaces and $GT_D$-spaces, respectively, by Lo in \cite{Lo_01}. In the last section we observe more connections with other separation axioms.

Let us say that in a topological space $X$ a set $A$ \emph{is separated from}
a set $B$ if there is an open neighborhood of $A$ disjoint with $B$. We also
identify a point $x \in X$ with the singleton $\{x\}$ when using this notion.
In \cite{T_1/4} Arenas, Dontchev, and Ganster defined a topological space $X$
to be $T_\frac{1}{4}$ if for every finite set $F\subset X$ and every point
$x\in X\setminus F$ either $x$ is separated from $F$ or $F$ is separated from
$x$. This property was considered earlier by Aull and Thron in
\cite{Aull_Thron_62} under name $T_F$. It turns out that the property is
equivalent to $T_\ClosedOrG{\infty}$. We include a proof for completeness.

\begin{proposition}
	A topological space $X$ is $T_\frac{1}{4}$ if and only if it is $T_\ClosedOrG{\infty}$.
	\begin{proof}
		Let $F$ be a finite subset of $X$ and $x \in X\setminus F$ be a point.
 If the singleton $\{x\}$ is
closed, then $F$ is separated from $x$. If the singleton $\{x\}$ is an
intersection of open sets, then it is separated from every point of $F$ and
hence from $F$ since it is finite.
		On the other hand, if $X$ is $T_\frac{1}{4}$, then it is $T_\ClosedOrG{\infty}$. Otherwise, there are points $x, y, z \in X$ such that $x$ is not separated from $y$, and $z$ is not separated from $x$. Therefore, $x$ is not separated from $\{y, z\}$, and $\{y, z\}$ is not separated from $x$.
	\end{proof}
\end{proposition}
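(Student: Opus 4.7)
The plan is to verify both implications directly from the definitions, exploiting the finiteness of the set $F$ in the definition of $T_\frac{1}{4}$.

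For the forward direction $T_\ClosedOrG{\infty} \implies T_\frac{1}{4}$, I would fix a finite set $F\subset X$ and a point $x\in X\setminus F$, and split into the two cases provided by the hypothesis. If $\{x\}$ is closed, then $X\setminus\{x\}$ is an open neighborhood of $F$ disjoint from $\{x\}$, so $F$ is separated from $x$. If instead $\{x\} = \bigcap\U$ for some family $\U$ of open sets, then for each $y\in F$, since $y\neq x$, I can pick some $U_y\in\U$ with $y\notin U_y$. Finiteness of $F$ lets me take the intersection $\bigcap_{y\in F}U_y$, which is an open neighborhood of $x$ disjoint from $F$, so $x$ is separated from $F$.

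For the reverse direction I would argue contrapositively: if $X$ is not $T_\ClosedOrG{\infty}$, pick a witness $x\in X$ such that $\{x\}$ is neither closed nor a $G_\infty$-set. The failure of $\{x\}$ being a $G_\infty$-set means the intersection of all open neighborhoods of $x$ strictly contains $\{x\}$, yielding some $y\neq x$ that lies in every open neighborhood of $x$. The failure of $\{x\}$ being closed means $X\setminus\{x\}$ is not open, yielding some $z\neq x$ such that every open neighborhood of $z$ contains $x$. Then I would set $F:=\{y,z\}$ and observe that it witnesses the failure of $T_\frac{1}{4}$ at $x$: every open neighborhood of $x$ contains $y\in F$, so $x$ is not separated from $F$; and every open neighborhood of $F$ in particular contains a neighborhood of $z$, hence contains $x$, so $F$ is not separated from $x$.

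There is no serious obstacle here; the only subtle point is parsing the asymmetric definition of ``separated from'' correctly and noticing that the two possible failures of the $T_\ClosedOrG{\infty}$ condition at $x$ correspond precisely to the existence of the two witnesses $y$ and $z$ that together defeat $T_\frac{1}{4}$. The finiteness of $F$ is used only in the forward direction (to intersect the finitely many neighborhoods $U_y$), which clarifies why the characterization is phrased in terms of finite $F$ rather than arbitrary $F$: for infinite $F$ the same argument would require $\kappa$-additivity of the neighborhood filter, which is what leads to the $T_{\kappa\mhyp\Borel}$-style characterizations of Theorem~\ref{t:Borel}.
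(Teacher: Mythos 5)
Your proof is correct and follows essentially the same route as the paper's: in the forward direction you split on whether $\{x\}$ is closed or an intersection of open sets and use finiteness of $F$ to intersect the separating neighborhoods, and in the reverse direction you extract the same two witnesses ($y$ in every neighborhood of $x$ from the failure of the $\G{\infty}$ condition, and $z\in\clo{\{x\}}\setminus\{x\}$ from the failure of closedness) and combine them into the doubleton $F=\{y,z\}$ that defeats $T_{\frac14}$. Your closing remark on where finiteness is used is a fair observation but not needed for the argument.
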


There are several separation axioms associated with the $\alpha$-topology (see for example \cite{alpha-separation}): for $i \in \{0, D, \frac{1}{2}, 1\}$ a topological space $(X, \tau)$ is \emph{$_\alpha T_i$} if the induced space $(X, \tau^\alpha)$ is $T_i$. Proposition~\ref{t:alpha} shows that $_\alpha T_1$ is equivalent to $T_\ClosedOrNwd$; $_\alpha T_\frac{1}{2}$ and $_\alpha T_D$ are equivalent to $T_{\w\mhyp\BP}$, and $_\alpha T_0$ is equivalent to $T_{\infty\mhyp\BP}$.
There is also a notion of \emph{feebly open} sets and the corresponding axioms
\emph{feebly $T_0$} and \emph{feebly $T_1$}. However, feebly open sets are
precisely $\alpha$-open sets \cite{semi-separation}, so the axioms \emph{feebly
$T_0$} and \emph{feebly $T_1$} are equivalent to the axioms $_\alpha T_0$ and
$_\alpha T_1$, respectively.

The axioms $_\alpha T_i$ can be equivalently defined using the classical
definitions of $T_i$ where open are sets replaced with $\alpha$-open sets and
closed are replaced with $\alpha$-closed sets. When semi-open sets are used
instead, we obtain the corresponding axioms \emph{semi-$T_i$}. Note that the
family of all semi-open sets is closed under unions, but not under finite
intersections, so it does not form a topology in general. It turns out that
semi-$T_i$ is equivalent to $_\alpha T_i$ for $i \in \{0, D, \frac{1}{2}\}$
\cite{semi-separation}, \cite{alpha-separation}, but semi-$T_1$ is strictly weaker
than $_\alpha T_1$. In fact, it is equivalent to $T_\NwdOrRO$ \cite{semi-separation}.

Originally, Levine defined $T_\frac{1}{2}$-spaces by the condition that every generalized closed set is closed \cite{Levine_70}. Later, Dontchev and Ganster defined $T_\frac{3}{4}$-spaces by the condition that every generalized $\delta$-closed set is $\delta$-closed \cite{T_3/4}. Here the $\delta$-topology stands for the semi-regularization topology. They also proved that a topological space is $T_\frac{3}{4}$ if and only if every singleton is closed or regular open, i.e.\ if it is $T_\ClosedOrRO$.

The equivalences of semi-$T_1$ and $T_\frac34$ with $T_\NwdOrRO$ and $T_\ClosedOrRO$, respectively, were the motivation for introducing $T_\ClosedOrRO$, $T_\ClosedMeetsRO$, and $T_\NwdOrRO$ in the first place. We do not know whether the property $T_\ClosedMeetsRO$ is equivalent to any separation axiom considered before.

We conclude with a copy of Diagram~\ref{diag_2} containing the names of equivalent separation axioms:


\begin{equation*}
\xymatrix{
	*+\txt{$T_1$, \\ $T_\Closed$} \ar@{=>}[d] \ar@{=>}[rr] &&
	*+\txt{$_\alpha T_1$, \\ $T_\ClosedOrNwd$} \ar@{=>}[d] &
	\\
	*+\txt{$T_\frac{3}{4}$, \\ $T_\ClosedOrRO$} \ar@{=>}[d] \ar@{=>}[r] &
	*+\txt{$T_\ClosedMeetsRO$} \ar@{=>}[d] \ar@{=>}[r]&
	*+\txt{semi-$T_1$, \\ $T_\NwdOrRO$} \ar@{=>}[d] &
	\\
	*+\txt{$T_{\frac12}$, $T_{ES}$, \\ $T_{\ClosedOrOpen}$, \\ $T_\ClosedOrG{<\w}$} \ar@{=>}[r] \ar@{=>}[d] &
	*+\txt{$T_D$, $T_\Constructible$, \\ $T_{\w\mhyp\Borel}$} \ar@{=>}[d] \ar@{=>}[r] &
	*+\txt{$_\alpha T_{\frac12}$, semi-$T_\frac{1}{2}$, \\ $_\alpha T_D$, semi-$T_D$, \\ $T_\OpenOrNwd$, $T_{\w\mhyp\BP}$} \ar@{=>}[d] &
	\\
	*+\txt{$GT_{\frac12}$, $T_\ClosedOrG{\delta}$, \\ $T_\ClosedOrG{<\w_1}$} \ar@{=>}[r] \ar@{=>}[d] &
	*+\txt{$GT_D$, $T_\Borel$, \\ $T_{\w_1\mhyp\Borel}$} \ar@{=>}[r] \ar@{=>}[d] &
	*+\txt{$T_\BP$, \\ $T_{\w_1\mhyp\BP}$} \ar@{=>}[d] &
	\\
	T_\ClosedOrG{<\kappa} \ar@{=>}[r] \ar@{=>}[d] &
	T_{\kappa\mhyp\Borel} \ar@{=>}[r] \ar@{=>}[d] &
	T_{\kappa\mhyp\BP} \ar@{=>}[d] &
	\\
	*+\txt{$T_\frac{1}{4}$, $T_F$, \\ $T_\ClosedOrG{\infty}$} \ar@{=>}[r] &
	*+\txt{$T_0$, \\ $T_{\infty\mhyp\Borel}$} \ar@{=>}[r] &
	*+\txt{$_\alpha T_0$, semi-$T_0$, \\ $T_{\infty\mhyp\BP}$} &
}
\end{equation*}

\newpage

\end{document}